\def\ex{\mbox{ex}}
\def\sat{\mbox{sat}}
\def\Sat{\mbox{Sat}}
\newtheorem{myDef}{Definition}
\newtheorem{thm}{Theorem}[section]
\newtheorem{cor}[thm]{Corollary}
\newtheorem{lem}[thm]{Lemma}
\newtheorem{prop}{Proposition}
\newtheorem{claim}{Claim}
\newtheorem{fact}{Fact}
\begin{document}

\title{Minimizing the number of edges in $\mathcal{C}_{\ge r}$-saturated graphs
\thanks{The work was supported by NNSF of China (No. 12071453) and  Anhui Initiative in Quantum Information Technologies (AHY150200) and National Key Research and Development Project (SQ2020YFA070080).}
}
\author{Yue Ma$^a$, \quad Xinmin Hou$^{a,b}$, \quad Doudou Hei$^a$ \quad Jun Gao$^a$\\
\small $^{a}$ School of Mathematical Sciences\\
\small University of Science and Technology of China, Hefei, Anhui 230026, PR China.\\
\small $^b$CAS Wu Wen-Tsun Key Laboratory of Mathematics\\
\small University of Science and Technology of China, Hefei, Anhui, 230026, PR China
}

\date{}

\maketitle

\begin{abstract}
Given a family of graphs $\mathcal{F}$, a graph $G$ is said to be $\mathcal{F}$-saturated if $G$ does not contain a copy of $F$ as a subgraph for any $F\in\mathcal{F}$ but the addition of any edge $e\notin E(G)$ creates at least one copy of some $F\in\mathcal{F}$ within $G$.
The minimum size of an $\mathcal{F}$-saturated graph on $n$ vertices are called the saturation number, denoted by $\sat(n, \mathcal{F})$. Let $\mathcal{C}_{\ge r}$ be the family of cycles of length at least $r$.
Ferrara et al. (2012) gave lower and upper bounds of $\sat(n, C_{\ge r})$ and determined the exact values of $\sat(n, C_{\ge r})$ for $3\le r\le 5$.
In this paper, we determine the exact value of $\sat(n,\mathcal{C}_{\ge r})$ for $r=6$ and $28\le \frac{n}2\le r\le n$ and give new upper and lower bounds for the other cases.
\end{abstract}

\section{Introduction}
Let $G=(V,E)$ be a graph. We call $|V|$ the {\it order} of $G$ and $|E|$ the size of it. If $|V|=n$, we call $G$ an $n$-vertex graph.
Given a family $\mathcal{F}$ of graphs, a graph $G$ is said to be {\em $\mathcal{F}$-saturated} if $G$ does not contain a subgraph isomorphic to any member  $F\in\mathcal{F}$ but  $G+e$ contains at least one copy of some $F\in\mathcal{F}$ for any edge $e\notin E(G)$.
The {\it Tur\'{a}n number} $\ex(n,\mathcal{F})$ of $\mathcal{F}$ is the maximum number of edges in an $n$-vertex $\mathcal{F}$-saturated graph.
The minimum number of edges in an $n$-vertex $\mathcal{F}$-saturated graph is called the {\it saturation number}, denoted by $\sat(n,\mathcal{F})$, i.e.
$$\sat(n,\mathcal{F})=\min\{|E(G)| : G\mbox{ is an $n$-vertex }\mathcal{F}\mbox{-saturated graph}\}\mbox{.}$$
We call an $n$-vertex $\mathcal{F}$-saturated graph of size $\sat(n,\mathcal{F})$ a {\it minimum extremal graph} for $\mathcal{F}$ and let $\Sat(n,\mathcal{F})$ be the family of all $n$-vertex minimum extremal graphs for $\mathcal{F}$.

Let $C_r$ denote the cycle of length $r$ and $\mathcal{C}_{\ge r}$ be the family of cycles of length at least $r$. Erd\H{o}s and Gallai (1959) proved the following theorem on Tur\'an number of $\mathcal{C}_{\ge  r}$.
\begin{thm}[The Erd\H{o}s-Gallai Theorem, \cite{EG59}]\label{ex}
Let $n\ge r$,
$$\ex(n,\mathcal{C}_{\ge r})\le\frac{(r-1)(n-1)}{2}.$$
\end{thm}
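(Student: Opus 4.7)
The plan is to prove the bound by induction on $n$, since for $n\le r-1$ the trivial estimate $\binom{n}{2}\le\frac{(r-1)(n-1)}{2}$ is already sufficient and can serve as a base. For $n\ge r$ I would first try to decompose $G$ along low-connectivity structure and reduce to smaller instances; the only case that resists this reduction is the 2-connected one, which will require a separate cycle-theoretic argument.

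For the decomposition step, suppose $G$ has a cut vertex or is disconnected. Then I can write $G=G_1\cup G_2$ where $|V(G_1)\cap V(G_2)|\le 1$ and $n_i:=|V(G_i)|<n$. Each $G_i$ has no cycle of length $\ge r$, so either $n_i<r$ (trivial bound) or the inductive hypothesis applies. Writing $n_1+n_2=n+|V(G_1)\cap V(G_2)|\le n+1$, the bounds add up to
\[
|E(G)|=|E(G_1)|+|E(G_2)|\le \frac{(r-1)(n_1-1)}{2}+\frac{(r-1)(n_2-1)}{2}\le \frac{(r-1)(n-1)}{2},
\]
which is exactly what is needed. Thus we may assume $G$ is 2-connected.

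The hard case, and the main obstacle, is the 2-connected case with $n\ge r$. My plan is to locate a single vertex of small enough degree to support the induction. For this I would invoke the classical Dirac-type circumference theorem: every 2-connected graph $G$ contains a cycle of length at least $\min\{2\delta(G),\,|V(G)|\}$. Since $G$ has no cycle of length $\ge r$ and $|V(G)|=n\ge r$, this forces $2\delta(G)<r$, so some vertex $v$ satisfies $d(v)\le\lfloor(r-1)/2\rfloor$. Removing $v$, the graph $G-v$ still has no cycle of length $\ge r$; applying the inductive hypothesis (going through the decomposition case above if $G-v$ is no longer 2-connected) gives
\[
|E(G)|\le |E(G-v)|+d(v)\le \frac{(r-1)(n-2)}{2}+\frac{r-1}{2}=\frac{(r-1)(n-1)}{2},
\]
closing the induction. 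The part of this that is really nontrivial is the Dirac-type circumference bound itself; if one wanted a fully self-contained proof I would replace it by a direct analysis of a longest path $P=v_0v_1\cdots v_\ell$ in $G$, using the fact that all neighbors of $v_0$ and $v_\ell$ lie on $P$ together with a Pósa-type rotation argument to build a long cycle whenever $d(v_0)+d(v_\ell)$ is large, forcing one endpoint to have the needed small degree.
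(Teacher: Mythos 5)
The paper never proves this statement: it is imported verbatim from Erd\H{o}s and Gallai \cite{EG59} and used as a black box, so there is no internal proof to compare yours against. Your argument is nevertheless correct, and it is essentially the classical proof of the Erd\H{o}s--Gallai circumference bound. The base case works since $\binom{n}{2}\le\frac{(r-1)(n-1)}{2}$ exactly when $n\le r-1$; the cut-vertex/disconnected reduction is sound because $n_1+n_2\le n+1$ makes the two inductive bounds sum to the target; and in the $2$-connected case Dirac's circumference theorem (every $2$-connected graph contains a cycle of length at least $\min\{2\delta(G),n\}$) together with $n\ge r$ forces $\delta(G)\le\frac{r-1}{2}$, after which deleting a minimum-degree vertex closes the induction --- noting, as you implicitly do, that $G-v$ is covered either by the inductive hypothesis or by the trivial bound when $n-1<r$. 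The one external ingredient is the Dirac circumference theorem; the paper quotes only the path version of Dirac's result (Theorem~\ref{THM: Dirac52}), but the cycle version is equally classical, and your suggestion to replace it by a P\'osa-type rotation argument on a longest path is the standard way to make the whole proof self-contained. So the proposal is a complete and correct proof of a statement the paper only cites.
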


For a single cycle $C_r$, there are many results for $\ex(n, C_r)$ and $\sat(n, C_r)$ have been known,
we review some of them in the following.
\begin{itemize}
\item (Simonovits~\cite{Simo74}) $\ex(n, C_{2k+1})=\lceil\frac{n^2}4\rceil$ for sufficiently large $n$;
\item (Erd\H{o}s-Bondy-Simonovits~\cite{Bon-Sim74}, The Even Cycle Theorem) $\ex(n, C_{2k})=O(n^{1+\frac1k})$;
\item (Erd\H{o}s, Hajnal, and Moon~\cite{EHM64})  $\sat(n,C_3)=n-1$ for $n\ge 3$;
\item (Ollmann~\cite{Oll72}, Tuza~\cite{Tuz89}, Fisher et al~\cite{FFL97})  $\sat(n, C_4)=\lfloor\frac{3n-5}{2}\rfloor$ for $n\ge 5$;

\item (Chen~\cite{Che09,Che11}) $\sat(n, C_5)=\lceil\frac{10}{7}(n-1)\rceil$ for $n\ge 21$;

\item(Barefoot et al.~\cite{BCE96} and Zhang et al.~\cite{Zhang15})  $\sat(n,C_6)\le\lfloor\frac{3n-3}{2}\rfloor$ for $n\ge 9$;

\item (F\"{u}redi and Kim~\cite{FK13})  $(1+\frac{1}{r+2})n-1<\sat(n,C_r)<(1+\frac{1}{r-4})n+\binom{r-4}{2}$ for all $r\ge 7$ and $n\ge 2r-5$;

\item
(Clark, Entringer, and Shapiro~\cite{Clark83, Clark92}, Lin et al~\cite{LJZY97})  $\sat(n,C_n)=\lceil\frac{3n}{2}\rceil$ for $n=17$ or $n\ge 19$.
\end{itemize}
A natural question is to determine $\sat(n,\mathcal{C}_{\ge r})$ for $n\ge r\ge 3$. It is trivial that $\sat(n,\mathcal{C}_{\ge 3})=n-1$ and $\Sat(n,\mathcal{C}_{\ge 3})=\{\mbox{tree on } n \mbox{ vertices}\}$.
Ferrara et al.~\cite{Subdivision12} proved that
\begin{thm}[Ferrara et al., Theorems 2.1, 2.13 and 2.17 in~\cite{Subdivision12}]\label{THM: subdivision}
(1) For $r\ge 3$ and $n\ge n(r)$, there exists an absolute constant $c$ such that
$$\frac{5n}{4}\le\sat(n, \mathcal{C}_{\ge r})\le\left(\frac 54+\frac cr\right)n.$$
In particular, if $r\ge 36$, $c=8$ will suffice.

(2) For $n\ge 1$, $\sat(n, \mathcal{C}_{\ge4})=n +\lfloor\frac{n-3}4\rfloor$.

(3) For $n\ge 5$, $\sat(n, \mathcal{C}_{\ge5})=\lfloor\frac{10(n-1)}7\rfloor$.

\end{thm}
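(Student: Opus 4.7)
The plan is to address the three parts of the theorem in turn, with the most attention to part~(1).

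For the lower bound $\sat(n,\mathcal{C}_{\ge r})\ge 5n/4$ in part~(1), let $G$ be a $\mathcal{C}_{\ge r}$-saturated graph on $n$ vertices. The defining property is that $G$ is $\mathcal{C}_{\ge r}$-free but for every non-edge $uv$ there exists a $u$-$v$ path in $G$ of length at least $r-1$; in particular, $G$ is connected. The idea is to show that the average degree is at least $5/2$ via a discharging argument, where each vertex is assigned initial charge $\deg(v)-5/2$. I would start by classifying vertices of degree $1$ and $2$. If $v$ is a leaf with unique neighbor $u$, then every $v$-$w$ path in $G$ for $w\ne u$ passes through $u$, so $u$ controls a long path to every other vertex. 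This forces, for $r$ large, that $\deg(u)\ge 3$ and that $u$ is not incident to any other leaf (else adding the edge between two leaves of $u$ would create only a triangle). A similar argument restricts the structure of maximal degree-$2$ paths, bounding their length and the types of endpoints they can have. Redistributing charge from high-degree vertices to nearby leaves and degree-$2$ vertices at a carefully chosen rate then yields $\sum_v\deg(v)\ge 5n/2$.

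For the upper bound in part~(1), I would construct an explicit $\mathcal{C}_{\ge r}$-saturated graph with $(5/4+c/r)n$ edges. A natural family takes a small gadget $H$ on $\Theta(r)$ vertices in which every non-edge is already covered by an internal path of length $\ge r-1$, and then chains $\Theta(n/r)$ copies of $H$ together at cut vertices. The gadget $H$ is chosen to have close to $5|V(H)|/4$ edges, contributing the leading term; the additive $c/r$ overhead comes from the ``boundary'' edges needed to ensure $H$ itself is saturated. Any added edge between distinct copies of $H$ is covered by a path that traverses at least one full copy of $H$, hence has length $\ge r-1$.

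For parts~(2) and~(3), the approach is more combinatorial. For $r=4$, every cycle in $G$ must be a triangle, so the block decomposition of $G$ consists only of edges and triangles; the saturation condition further constrains how these blocks are glued along the block tree, and a direct optimization over the resulting triangle-cacti yields the exact value $n+\lfloor(n-3)/4\rfloor$. For $r=5$, blocks may additionally be $4$-cycles, and an analogous but more delicate optimization yields $\lfloor 10(n-1)/7\rfloor$, with matching constructions built from repeated optimal gadgets.

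I expect the main obstacle to be the lower bound $5n/4$ for general $r$. Leaves (charge deficit $3/2$) and degree-$2$ vertices (charge deficit $1/2$) are both compatible with being $\mathcal{C}_{\ge r}$-free, and long degree-$2$ chains can appear freely inside $G$ as far as the forbidden-subgraph condition is concerned; it is the interaction with saturation that must be exploited to limit them. The discharging rules must be tuned so that these deficits are fully absorbed by the degree-$\ge 3$ vertices the low-degree vertices depend on, without any double-counting, and this local accounting is what requires the precise structural lemmas about leaves and degree-$2$ paths described above.
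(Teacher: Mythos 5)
This statement is quoted from Ferrara et al.~\cite{Subdivision12} (their Theorems 2.1, 2.13 and 2.17); the present paper gives no proof of it, so your attempt can only be judged on its own merits. As a proof it has a genuine gap at its central quantitative step, the lower bound $\sat(n,\mathcal{C}_{\ge r})\ge \frac{5n}{4}$. You set up a discharging with initial charge $d(v)-\frac52$, so a leaf carries deficit $\frac32$ and a degree-$2$ vertex deficit $\frac12$, and you propose to absorb these locally from the neighboring high-degree vertices. But the local facts you actually establish are too weak to close the accounting: you only show that the support vertex $u$ of a leaf has $d(u)\ge 3$, and a degree-$3$ vertex has surplus only $\frac12$, which cannot pay the leaf's deficit of $\frac32$. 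Degree-$3$ support vertices genuinely occur in saturated graphs (this paper's Lemma 3.2 explicitly reserves the class $X_3$ for them), so the configuration ``leaf attached to a degree-$3$ vertex'' has net charge $-1$ and you never say where the compensating $+1$ comes from; it must be extracted from vertices at distance $\ge 2$, which requires the kind of global partition and edge-counting across classes that Corollary 3.4 of this paper carries out for its stronger bound. Without specifying those rules and verifying nonnegativity of every final charge, the claimed conclusion $\sum_v d(v)\ge \frac{5n}{2}$ is asserted, not proved.

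There are also errors in the sketches of parts (2) and (3). For $r=4$ the block classification (only $K_2$ and $K_3$) is right, but for $r=5$ your claim that blocks are edges, triangles and $4$-cycles is false: $K_{2,t}$ and the books $K_2+\overline{K_t}$ are $2$-connected, contain no cycle of length $\ge 5$, and do appear as blocks (they are exactly the graphs $H(n,5,2)$ in this paper's notation), so the ``optimization over triangle/$C_4$-cacti'' ranges over the wrong class and cannot by itself yield $\lfloor\frac{10(n-1)}{7}\rfloor$. The upper-bound construction for part (1) by chaining $\Theta(n/r)$ saturated gadgets is the right idea (it parallels the constructions $C(G_1,\dots,G_k;\dots)$ in Section 4 of this paper), but you exhibit no gadget with $\approx\frac54|V(H)|$ edges and verify neither its saturation nor that edges between copies are covered, so this half too remains a plan rather than a proof.
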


In this paper, we determine the exact values of $\sat(n,\mathcal{C}_{\ge r})$ for $r=6$ and $r$ with $56\le r\le n\le 2r$, and give new lower and upper bounds of $\sat(n,\mathcal{C}_{\ge r})$.
The main results of the paper are the following.
\begin{thm}\label{THM: 6}




For $n\ge6$,
     \begin{equation*}
     	sat(n,\mathcal{C}_{\ge6})=\begin{cases}
     		9   & n=6;\\
     		11   & n=7;\\
     		12   & n=8;\\
     		13   & n=9;\\
     		\left\lceil \frac{3(n-1)}{2}\right\rceil   & n\ge10.
     	\end{cases}
     \end{equation*}
\end{thm}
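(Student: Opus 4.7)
The plan splits by size: for $n\in\{6,7,8,9\}$ the five claimed values require ad hoc verification, while for $n\ge 10$ the formula $\lceil 3(n-1)/2\rceil$ needs matching upper and lower bounds.

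For the upper bound, I would exhibit an explicit $\mathcal{C}_{\ge 6}$-saturated graph on $n$ vertices with the asserted number of edges. For each small case $n\in\{6,7,8,9\}$, I would give a specific graph with $9,11,12,13$ edges respectively (for instance, for $n=6$ a graph built on top of $K_{2,3}$ with carefully chosen extra edges so that every non-edge has a path of length at least $5$ between its endpoints, and analogous small constructions for $n=7,8,9$). For $n\ge 10$, the construction will be a chain of small $2$-connected blocks of circumference at most $5$ (using $C_3,C_4,C_5,K_4,K_4-e,$ or $K_{2,3}$), glued along cut vertices in a path-like arrangement, with a parity-dependent adjustment so that the total edge count equals $\lceil 3(n-1)/2\rceil$ exactly. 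In each case saturation is verified by checking (i) no cycle of length $\ge 6$, which follows from the block structure together with the tree-ness of the block-graph, and (ii) that every non-adjacent pair has a path of length $\ge 5$ between them, which is done block by block.

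For the matching lower bound, let $G$ be any $\mathcal{C}_{\ge 6}$-saturated graph. The key reduction is that if $v$ is a leaf of $G$ then $G-v$ is again $\mathcal{C}_{\ge 6}$-saturated, because $v$ lies on no cycle, so any $C_{\ge 6}$ created in $G$ by adding an edge not incident to $v$ already lives in $G-v$. This allows an induction on $n$, anchored at the verified values for $n\le 9$, which yields the desired bound whenever $\delta(G)=1$. When $\delta(G)\ge 2$, I would analyze the block-tree of $G$ directly: every block has circumference at most $5$, and the saturation constraint (long paths between every non-adjacent pair, both within and across blocks) will force a local lower bound on the number of edges in each block relative to its number of vertices and cut vertices. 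Patching these bounds across the block-tree, and separately handling the few exceptional small blocks that straddle the block-tree leaves, should yield $|E(G)|\ge \lceil 3(n-1)/2\rceil$.

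The main obstacle is the lower bound when $\delta(G)\ge 2$, because the class of $2$-connected graphs with circumference at most $5$ is rich: it includes $K_{2,m}$ for arbitrary $m$, theta graphs, books, fans on $\le 5$ vertices, and various subdivisions, and without the saturation constraint such blocks could be arbitrarily edge-sparse. The technical heart of the proof is to translate the ``path of length $\ge 5$ between every non-adjacent pair'' requirement into a sharp local edge count for each admissible block type and each admissible gluing configuration. A further subtlety is a parity discrepancy in the induction step: removing a leaf gives the correct bound for odd $n$ but is off by one for even $n$, so either an induction that removes a pair of vertices or a separate argument in the even case (likely exploiting that $\delta(G)\ge 2$ graphs with this structure have strictly larger edge count than the naive count suggests) is needed to close the gap.
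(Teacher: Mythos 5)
There is a genuine gap on both sides. For the upper bound, your proposed construction for $n\ge 10$ cannot work: in a $\mathcal{C}_{\ge 6}$-saturated graph every block on fewer than $6$ vertices must be a complete graph (any two nonadjacent vertices of a block need a path of length $\ge 5$ \emph{inside that block}, which is impossible on $\le 5$ vertices), so $C_4$, $C_5$, $K_4-e$ and $K_{2,3}$ are not admissible blocks. Moreover the saturation condition forbids two triangles sharing a cut vertex and forbids cut edges at vertices of $K_3$- or $K_4$-blocks, so a path-like chain of small complete blocks is either not saturated or has edge density at least $2$ per vertex ($K_4$, $K_5$ chains), well above $\frac32$. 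The extremal graphs necessarily contain a single large \emph{sparse} $2$-connected block: the paper's $M_{6,n}$ is $D(\frac{n-7}{2},2)$ (two triangles sharing a vertex, blown up by duplicating two of the degree-two positions) with a pendant edge at each non-center vertex, giving exactly $\frac{3(n-1)}{2}$ edges for odd $n$. Your proposal never identifies these sparse blocks, and without them the claimed edge count is unachievable.

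For the lower bound, your leaf-removal reduction is correct as stated ($G-v$ is again saturated), but as you yourself note it loses one edge for even $n$, and neither suggested repair closes this: removing two leaves gives $\mathrm{sat}(n-2)+2$, which is still one short for even $n$. More importantly, the $\delta(G)\ge 2$ case is exactly where the work lies, and ``every block has circumference at most $5$'' is the wrong organizing principle: what is needed is the full classification of $2$-connected $\mathcal{C}_{\ge 6}$-saturated graphs (the paper's Lemma~\ref{61}: only $H(n,6,2)$ and $D(a,b)$ with $a,b\ge 2$, proved via Dirac's theorem and a $K_4^-$ analysis), together with the structural restrictions on where pendant edges may attach (only to non-center vertices of $K_5$, $D(r,s)$, $H(t,6,2)$ blocks). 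These yield the inequality $b_2\le 5b_5+\sum(r^i+s^i)+\sum(t^j-2)$, and a global count over all blocks then gives $e(G)\ge \frac32(n-1)$ in one stroke, with no parity loss and no induction. Your sketch gestures at ``a sharp local edge count for each admissible block type'' but supplies neither the list of admissible blocks nor the count, so the technical heart of the argument is missing.
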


\begin{thm}\label{THM: lower}

$\sat(n,\mathcal{C}_{\ge r})\ge n+\frac{r}{2}$ for $2r \ge n\ge r\ge 6$.
\end{thm}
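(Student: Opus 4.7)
The plan is to show that the cyclomatic number $\mu(G):=|E(G)|-n+1$ satisfies $\mu(G)\ge \lceil r/2\rceil+1$, equivalent to $|E(G)|\ge n+r/2$. First, $G$ must be connected, since otherwise an edge added between two components creates no cycle, contradicting saturation. Second, $G$ must contain a cycle: if $G$ were a tree, then for any leaf $v$ with neighbor $u$, saturation applied to the non-edges $vw$ (for all $w\ne u,v$) would force a $u$-to-$w$ tree-path of length at least $r-2$; but $\deg_G(u)\ge 2$ (since $n\ge r\ge 6$) gives a tree-neighbor of $u$ at distance $1$, which is incompatible with $r\ge 6$ in a tree.

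I next fix a longest path $P=v_0v_1\ldots v_\ell$ in $G$. Since $G\ne K_n$, some non-edge exists and its saturation witness is a $G$-path of length $\ge r-1$, hence $\ell\ge r-1$. By the maximality of $P$ and the absence of $\mathcal{C}_{\ge r}$, every neighbor of $v_0$ lies in $\{v_1,\ldots,v_{r-2}\}$, and symmetrically every neighbor of $v_\ell$ lies in $\{v_{\ell-r+2},\ldots,v_{\ell-1}\}$. Each chord $v_0v_i$ with $i\ge 2$ yields a fundamental cycle $v_0v_1\ldots v_iv_0$; distinct chords give cycles with distinct private edges, contributing $\deg(v_0)-1$ linearly independent cycles in the cycle space, and analogously $\deg(v_\ell)-1$ edge-disjoint ones from $v_\ell$. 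P\'osa rotations then give more: each chord $v_0v_i$ turns $P$ into a longest path with endpoint $v_{i-1}$, forcing $v_{i-1}v_\ell$ to be a non-edge (else a cycle on $V(P)$ of length $\ell+1\ge r$), and the rotated endpoint supplies its own chord structure. For any vertex $x\notin V(P)$, both $xv_0$ and $xv_\ell$ are non-edges (otherwise $P$ could be extended), so saturation provides long $x$-$v_0$ and $x$-$v_\ell$ paths in $G$; analyzing these via the block-cut decomposition of $G$ yields further independent cycles.

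Aggregating these contributions while using $n\le 2r$ should give $\mu(G)\ge r/2+1$. The main obstacle will be the case in which $P$ is (nearly) Hamiltonian and the chords cluster near the endpoints: the absence of $\mathcal{C}_{\ge r}$ forbids any chord $v_iv_j$ with $|i-j|\ge r-1$, while saturation demands a long detour in $G$ for every non-edge $v_iv_j$ with $2\le |i-j|\le r-2$. Reconciling these twin constraints by a pigeonhole argument on the admissible chord positions along $P$, combined with the rotation-generated cycles and the off-path contributions, is the most delicate step I foresee.
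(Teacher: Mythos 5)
There is a genuine gap: what you have written is a plan, not a proof. The decisive quantitative step --- showing that the various cycles you collect are independent and that their number reaches $\lceil r/2\rceil+1$ --- is exactly the part you defer (``should give'', ``the most delicate step I foresee''), so the argument never closes. Worse, the main engine you propose produces nothing in the tight cases. The chords at the two ends of a longest path contribute at most $(d(v_0)-1)+(d(v_\ell)-1)$ independent cycles, and in the extremal graphs of this paper (a $C_r$-saturated $3$-regular core on $r$ vertices with pendant edges attached, i.e.\ $L(M_{r,r};U,\emptyset)$) a longest path begins and ends at leaves, so $d(v_0)=d(v_\ell)=1$ and this count is $0$. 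P\'osa rotation is likewise of limited use here because $G$ need not be $2$-connected --- the rotated endpoint can be a cut vertex or a leaf-neighbour --- and your treatment of off-path vertices (``analyzing these via the block-cut decomposition \ldots yields further independent cycles'') is not an argument. Since almost all of the $\lceil r/2\rceil+1$ independent cycles must come from the parts you leave unspecified, the proof as proposed would fail.

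For comparison, the paper proceeds entirely differently: it partitions $V(G)$ into classes $X_1,X_2,X_2',X_3,X_{\ge4},Y,Z$ according to degree and adjacency to leaves and degree-two vertices, proves structural restrictions on which classes can be joined (Lemma~\ref{THM: structure}), uses a discharging argument to show $\overline{d}_{H}(Y_2)\ge\frac52$ in the core $H=G[Y\cup Z\cup X_{\ge4}]$, derives linear inequalities among the class sizes (Corollary~\ref{COR:ineq}), and finally combines these with the bound $e(G)\ge\frac54 n$ of Ferrara et al.\ via a weighted sum whose coefficients depend on $r/n$. If you want to salvage a cycle-space approach, you would at minimum need a substitute for that structural analysis to handle leaves, degree-two vertices, and the block decomposition explicitly, rather than a single longest path.
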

To give the new upper bound of $\sat(n,\mathcal{C}_{\ge r})$ and the exact value of $\sat(n,\mathcal{C}_{\ge r})$ for $\frac n2\le r\le n$, we define a function $g(x)$(see Figure~\ref{fuc}) on $x\in(0,1]\cap\mathbb{Q}$:
\begin{equation}
g(x)=\left\{\begin{array}{ll}
1+\frac{1}{2}x, & \mbox{if } x\in[\frac{1}{2},1],\\
1+\frac{k}{2}x, & \mbox{if } x\in (\frac{1}{2k},\frac{2}{4k-3}],\\
2-\frac{3k-3}{2}x, & \mbox{if } x\in (\frac{2}{4k-3},\frac{1}{2k-2}],

 \end{array} \right.  \mbox{ for $k\ge 2$}.
\end{equation}

\begin{figure}[h]
\centering
\includegraphics[width=5.5in]{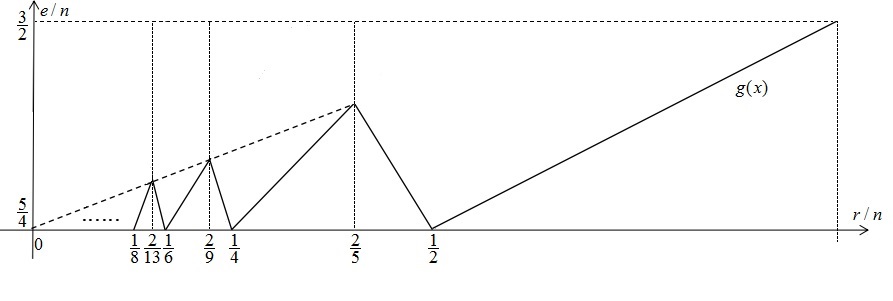}
\caption{The image of $g(x)$}\label{fuc}
\end{figure}

\begin{thm}\label{THM: upper}
(i) $\sat(n,\mathcal{C}_{\ge r})\le g(\frac{r}{n})n+O(\frac{n}{r})$ for $n\ge r\ge 56$.

(ii) $\sat(n,\mathcal{C}_{\ge r})=n+\lceil\frac{r}{2}\rceil$ for  $28\le\frac{n}2\le r\le n$.
\end{thm}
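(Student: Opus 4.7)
In the regime $28\le n/2\le r\le n$ we have $r\ge 28\ge 6$ and $2r\ge n\ge r$, so Theorem~\ref{THM: lower} applies and gives $\sat(n,\mathcal{C}_{\ge r})\ge n+r/2$. Since the left-hand side is an integer, this immediately upgrades to $\sat(n,\mathcal{C}_{\ge r})\ge n+\lceil r/2\rceil$, so the lower bound of (ii) is essentially inherited from the previous theorem.

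\textbf{Upper bound of (ii).} For every admissible $(n,r)$, I would exhibit an $n$-vertex $\mathcal{C}_{\ge r}$-saturated graph with exactly $n+\lceil r/2\rceil$ edges. The template I would try is a long cycle $C$ of length $r-1$, together with a tree spanning the remaining $n-r+1$ vertices attached to $C$ at a single vertex, decorated with $\lceil r/2\rceil$ carefully placed chords of $C$. The chords are chosen so that (a) no union of cycle arcs and chord hops yields a cycle of length $\ge r$, and (b) every non-edge closes a detour of length $\ge r-1$ between its endpoints. Verifying (a) and (b) splits into a case analysis by the location of the non-edge (inside $C$, between $C$ and the tree, or inside the tree), which is the combinatorial crux.

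\textbf{Proof of (i).} The piecewise form of $g$ suggests a family of constructions parameterized by the integer $k$ with $r/n \in (\tfrac{1}{2k},\tfrac{1}{2k-2}]$. A natural design is a chain of $\Theta(n/r)$ ``gadgets'' on $\Theta(r)$ vertices, glued at single vertices and padded with pendant paths to reach exactly $n$ vertices; each gadget contributes $\approx kr/2$ additional edges in the lower part of the $k$-th interval and slightly fewer in the upper part, reproducing the two pieces of $g$. Each gadget is designed so that any non-edge inside or spanning it forces a detour through roughly $k$ adjacent gadgets, closing a cycle of length $\ge r$.

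\textbf{Main obstacle.} The hardest step throughout is verifying saturation: for every non-edge $e$, one must exhibit an explicit $\ge r$ cycle in $G+e$ via a case analysis on the location of $e$ (inside a gadget, spanning two gadgets, pendant-to-gadget, pendant-to-pendant). The dual task of verifying $\mathcal{C}_{\ge r}$-freeness is typically lighter, requiring only an upper bound on cycle lengths. Matching the tight constant $\lceil r/2\rceil$ in (ii), rather than $n+\lceil r/2\rceil+O(1)$, is the most delicate calibration and probably drives the hypothesis $r\ge 28$.
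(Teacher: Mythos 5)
Your treatment of the lower bound in (ii) is fine and coincides with the paper's: it is exactly Theorem~\ref{THM: lower} (proved as Corollary~\ref{COR: (2)}) together with integrality. The construction you propose for the upper bound of (ii), however, cannot work. In your graph the cycle $C$ of length $r-1$ together with its chords spans only $r-1$ vertices and is separated from the tree by a cut vertex, so it is contained in a block on at most $r-1$ vertices. Consequently every cycle through a chord of $C$, and every path joining two vertices of $C$, stays inside this block and has length at most $r-1$, respectively $r-2$. Hence adding a missing chord of $C$ can never create a cycle of length $\ge r$, and your requirement (b) --- that every non-edge close a detour of length $\ge r-1$ --- is unsatisfiable for non-edges with both ends on $C$: the graph is $\mathcal{C}_{\ge r}$-free but not saturated, no matter how the $\lceil r/2\rceil$ chords are placed. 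The missing idea is that the base graph must live on exactly $r$ vertices and be \emph{maximally non-Hamiltonian} with the minimum number $\lceil 3r/2\rceil$ of edges; such graphs exist for $r\ge 56$ by the Clark--Entringer, Lin et al.\ and Stacho results ($\sat(r,C_r)=\lceil\frac{3r}2\rceil$, realized by modified Isaacs flower snarks; this is Lemma~\ref{LEM: 56}). One then attaches a pendant edge to each of $n-r$ \emph{distinct} vertices (possible precisely because $n\le 2r$), rather than a tree at a single vertex, giving $\lceil\frac{3r}2\rceil+(n-r)=n+\lceil\frac r2\rceil$ edges; saturation is verified using that non-adjacent pairs of $M_{r,r}$ are joined by Hamiltonian paths and that every edge of $M_{r,r}$ lies on a cycle of length $r-1$ (Lemma~\ref{LEM: mnh}, which is where the ``good graph'' condition is needed).

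For (i) your chain of gadgets has the right silhouette --- the paper chains $k$ or $k-1$ copies of $M_{r,r}$, each decorated with pendant edges and pendant triangles and joined to the next by a bridge between two support vertices --- but as written it is only a shape, not a proof: the gadget is unspecified, and the entire difficulty is that the gadget must be a minimum maximally non-Hamiltonian graph with the two properties above, which is where all the cited machinery enters. Two further points of calibration: pendant \emph{paths} of length at least two are not permitted (the $K_2$-blocks of a $\mathcal{C}_{\ge r}$-saturated graph form a matching by Lemma~\ref{LEM:cut}), so the padding must be by pendant edges and triangles; and the numerical hypotheses are driven not by tuning the edge count but by the flower-snark constructions, which require $r\ge 56$.
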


\noindent{\bf Remark:} (1) In fact, we have proved that  $O(\frac nr)<\frac{2n}r$ in the proof of Theorem~\ref{THM: upper}. So, for $n\ge r\ge 56$,
 \begin{equation*}
     	sat(n,\mathcal{C}_{\ge r})\begin{cases}
     		=n+\lfloor\frac r2\rfloor   &  r\le n\le 2r;\\
     		\le (\frac {5k-3}{4k-3}+\frac 2r)n   & 2(k-1)r-2(k-2)\le  n<\frac{4k-3}2r;\\
     		\le (\frac{5k-3}{4k-3}+\frac 2r)n  & \frac{4k-3}2r\le n < 2kr-2(k-1).
     		
     	\end{cases}
 \end{equation*}
The new upper bound is better than the one given in~\ref{THM: subdivision} for the first case and the other case when $n$ is large enough such as $n>r^2$.

(2) Theorem~\ref{THM: upper} (ii) gives the exact value of $\sat(n,\mathcal{C}_{\ge r})$ when $28\le\frac n2\le r\le n$, however, the lower bound given in Theorem~\ref{THM: lower} holds for $2r\ge n\ge r\ge 6$.

(3) Ferrara et al.~\cite{Subdivision12} also observed that for large $n$, $\sat(n,C_r)$ and $\sat(n, \mathcal{C}_{\ge r})$ agree for $r=3$ and 5 and differ for all other values of $r$, save perhaps for $r=6$.
From Barefoot et al.~\cite{BCE96}, Zhang et al.~\cite{Zhang15}) and Theorem~\ref{THM: 6}, we know that $\sat(n,C_6)<\sat(n, \mathcal{C}_{\ge 6})$ when $3n-3$ is odd and  $n\ge 10$.

The rest of the article is arranged as follows. We give the proof of Theorem~\ref{THM: 6} in Section 2. In Section 3, we will give a structural theorem for $\mathcal{C}_{\ge r}$-saturated graphs and the proof of Theorem~\ref{THM: lower}. We prove Theorem~\ref{THM: upper} in Section 4 and give some remarks in the last section.

\section{Proof of Theorem~\ref{THM: 6}}

The following result is due to Dirac.

\begin{thm}[Dirac 1952, Theorem 4 in~\cite{Dirac52}]\label{THM: Dirac52}
Let $G$ be a connected graph with $\delta(G)\ge d$. If $n\ge 2d$ then $G$ contains a path of length at least $2d$.
\end{thm}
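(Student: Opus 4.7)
The plan is the classical Dirac rotation argument on a longest path. I would take a longest path $P=v_0v_1\cdots v_\ell$ in $G$ and argue by contradiction, supposing $\ell<2d$. Since $P$ cannot be extended at either end, every neighbor of $v_0$ and of $v_\ell$ must already appear on $P$.

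Next, form the two index sets
$A=\{i\in\{1,\ldots,\ell\} : v_0v_i\in E(G)\}$ and
$B=\{i\in\{1,\ldots,\ell\} : v_{i-1}v_\ell\in E(G)\}$.
The minimum-degree hypothesis $\delta(G)\ge d$ gives $|A|,|B|\ge d$, and because $|A|+|B|\ge 2d>\ell$, pigeonhole forces some common index $i\in A\cap B$. The two chords $v_0v_i$ and $v_{i-1}v_\ell$, spliced with the edges of $P$, then close up into a cycle
$C=v_0v_iv_{i+1}\cdots v_\ell v_{i-1}v_{i-2}\cdots v_1v_0$
of length $\ell+1$ whose vertex set is exactly $V(P)$.

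Finally, I would exploit connectedness to pull a longer path out of $C$. If $V(C)\ne V(G)$, then by connectedness some vertex $u\notin V(C)$ is adjacent to a vertex $w\in V(C)$; traversing $C$ from $w$ all the way around and then appending $u$ at the end yields a path of length $\ell+1>\ell$, contradicting the maximality of $P$. Otherwise $V(C)=V(G)$, so $n=\ell+1$; combined with $n\ge 2d$ and the standing assumption $\ell<2d$ this pins down the borderline equality $n=2d$, and the desired long path is read off directly from the spanning cycle $C$.

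The essential mechanism is the pigeonhole on $A$ and $B$, a prototype of the P\'osa/Dirac rotation technique; no harder idea is needed. The only real nuisance is the tight borderline case $n=2d$ where the constructed cycle is Hamiltonian, which must be handled by a short direct inspection rather than by the main rotation argument itself.
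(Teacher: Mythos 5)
The paper offers no proof of this statement---it is quoted from Dirac's 1952 paper as a known result---so your argument can only be compared against the classical proof it is meant to reproduce. The core of your proposal is exactly the standard rotation/pigeonhole argument, and it is carried out correctly as far as it goes: if a longest path $P=v_0\cdots v_\ell$ has $\ell<2d$, then $A,B\subseteq\{1,\dots,\ell\}$ with $|A|+|B|\ge 2d>\ell$ forces some $i\in A\cap B$, the two chords splice $P$ into a cycle $C$ on $V(P)$ of length $\ell+1$, and connectedness together with the maximality of $P$ forces $V(C)=V(G)$, hence $n=\ell+1\le 2d$. Up to this point everything is sound.

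The gap is your final sentence. When $n=2d$, the spanning cycle $C$ has exactly $2d$ vertices, so the longest path you can ``read off'' from it has $2d$ vertices and length $2d-1$, not $2d$. This borderline case cannot be ``handled by a short direct inspection,'' because the statement as written is actually false there: $K_{2d}$ is connected with $\delta=2d-1\ge d$ and $n=2d$, yet its longest path has length $2d-1$. What your argument genuinely proves is Dirac's correct conclusion, namely a path of length at least $\min\{2d,\,n-1\}$ (equivalently, the claimed bound under the stronger hypothesis $n\ge 2d+1$); the defect is in the quoted statement as much as in your proof, but the last step as you wrote it does not go through. Note also that where the paper invokes this theorem (to produce a \emph{cycle} of length at least $6$ in a $2$-connected graph with $\delta\ge 3$), it is really using Dirac's $2$-connected cycle theorem ($C$ of length at least $\min\{2d,n\}$), which the path argument above does not by itself deliver.
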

Note that a cycle is 2-connected. From the definition of the $\mathcal{C}_{\ge r}$-saturated graph, we have the following two facts.
\begin{fact}\label{FACT: sm}
A $\mathcal{C}_{\ge r}$-saturated graph $G$ on $n$ vertices must be connected.
\end{fact}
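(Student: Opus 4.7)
The plan is a short argument by contradiction using only the definition of saturation. Suppose to the contrary that $G$ is a $\mathcal{C}_{\ge r}$-saturated graph on $n$ vertices that is disconnected, and let $G_1, G_2$ be two distinct components of $G$.

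Pick any $u\in V(G_1)$ and any $v\in V(G_2)$. Since $u$ and $v$ lie in different components, $uv\notin E(G)$. By the saturation property, $G+uv$ contains some cycle $C$ of length at least $r$. This cycle cannot lie entirely in $G$ (as $G$ is $\mathcal{C}_{\ge r}$-free), so $C$ must traverse the new edge $uv$. Removing $uv$ from $C$ then yields a $u$-$v$ path inside $G$, contradicting the fact that $u$ and $v$ belong to different components of $G$.

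There is no real obstacle here: the argument is a direct unpacking of the definition of an $\mathcal{F}$-saturated graph, observing that the cycle created by adding a non-edge $e$ must contain $e$ and therefore provides a path joining the two endpoints of $e$ in $G$. The only mild subtlety worth flagging is that this argument uses nothing specific to cycles of length at least $r$ beyond the obvious fact that every cycle passes through each of its edges, so the same one-line proof would work for any family $\mathcal{F}$ of $2$-edge-connected graphs.
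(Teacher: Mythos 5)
Your proof is correct and is essentially the argument the paper intends: the paper dispenses with Facts 1 and 2 by remarking that a cycle is $2$-connected, which is precisely your observation that the cycle created in $G+uv$ must traverse the new edge $uv$ and hence leaves behind a $(u,v)$-path in $G$. Nothing further is needed.
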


\begin{fact}\label{FACT: f2}
Let $G$ be a $\mathcal{C}_{\ge r}$-saturated graph. Then any pair of nonadjacent vertices in $G$ must be connected by a path of length at least $r-1$ in $G$.
\end{fact}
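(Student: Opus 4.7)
The plan is to apply the defining property of $\mathcal{C}_{\ge r}$-saturation directly to the given non-edge. Let $u,v$ be any nonadjacent pair in $G$. Since $G$ is $\mathcal{C}_{\ge r}$-saturated and $uv\notin E(G)$, the graph $G+uv$ contains at least one cycle of length at least $r$. I would fix such a cycle $C$ of length $\ell\ge r$ in $G+uv$ and proceed to analyze it.

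Next, I would invoke the fact that $G$ itself contains no cycle of length at least $r$. Consequently any cycle witnessing the saturation at $uv$ is genuinely new in $G+uv$ and must therefore pass through the unique added edge $uv$. In particular the cycle $C$ fixed above uses $uv$. Deleting the edge $uv$ from $C$ then produces a $u$--$v$ path whose edges all lie in $G$ and whose length is $\ell-1\ge r-1$, which is the desired conclusion.

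I do not anticipate any real obstacle: the statement is essentially a direct unpacking of the definition of $\mathcal{C}_{\ge r}$-saturation together with the observation that a new long cycle created by adding a single edge must use that edge.
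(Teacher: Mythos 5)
Your proof is correct and is exactly the intended argument: the paper states this as an immediate consequence of the definition ("From the definition of the $\mathcal{C}_{\ge r}$-saturated graph, we have the following two facts") without writing out a proof, and your unpacking --- the new cycle in $G+uv$ must use the added edge $uv$ since $G$ itself is $\mathcal{C}_{\ge r}$-free, so removing $uv$ leaves a $(u,v)$-path of length at least $r-1$ in $G$ --- is precisely the reasoning being invoked.
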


Given integers $n\ge k\ge 2r$, let $H(n,k,r)$ be the graph obtained from the complete graph $K_{k-r}$ by connecting each vertex of the empty graph $\overline{K_{n-k+r}}$ to the same $r$ vertices of $K_{k-r}$, we call the $r$ vertices of $K_{k-r}$ the {\it center} of $H(n,k,r)$.
The following result is due to Kopylov~\cite{Kopylov77}
\begin{thm}[Kopylov~\cite{Kopylov77}]\label{THM:Koplov}
Let $n\ge k\ge 5$ and let $r=\lfloor\frac{k-1}2\rfloor$. If $G$ is a 2-connected $n$-vertex graph with
$e(G)\ge\max\{e(H(n, k, 2)), e(H(n, k,r))\}$,
then either $G$ has a cycle of length at least $k$, or $G=H(n,k,2)$ or $G=H(n,k,r)$.
\end{thm}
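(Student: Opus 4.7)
The plan is to argue by contradiction: suppose $G$ is a 2-connected $n$-vertex graph with circumference $c(G) \le k-1$ and $e(G) \ge \max\{e(H(n,k,2)), e(H(n,k,r))\}$ but $G \notin \{H(n,k,2), H(n,k,r)\}$. I would fix a longest cycle $C = v_1 v_2 \cdots v_c v_1$ with $c := c(G) \le k-1$, let $R := V(G) \setminus V(C)$, and exploit the interaction between $C$ and $R$. By 2-connectivity and the maximality of $C$, every component of $G[R]$ has at least two attachment points on $C$, and no vertex $u \in V(G)$ can have its $C$-neighbors ``spread out'' too widely along $C$---otherwise a P\'osa-style rotation through $u$ would build a cycle of length greater than $c$, contradicting maximality.

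The crux is to upgrade these local observations to a global structural statement: there is a common set $S \subseteq V(C)$ with $|S| \le \lfloor (c-1)/2 \rfloor \le r$ such that every vertex of $R$ has all its $C$-neighbors in $S$, the vertices of $R$ form an (essentially) independent set attached only to $S$, and $V(C) \setminus S$ is clique-like. The idea is to order the set $N_C(R)$ of all $C$-neighbors of $R$ cyclically around $C$ and argue that between two consecutive such neighbors the arc of $C$ must be long enough that bypassing it via $R$ would produce a cycle longer than $c$; iterating yields $|N_C(R)| \le r$. After this reduction, $G$ becomes a spanning subgraph of $H(n,k,a)$ for some integer $a$ with $2 \le a \le r$.

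The final step is a convexity check. Using
\[
e(H(n,k,a)) = \binom{k-a}{2} + a(n-k+a),
\]
one verifies that this is a convex function of $a$, so on the integer interval $[2,r]$ its maximum is attained only at the endpoints $a = 2$ and $a = r$. Any $a$ strictly inside the interval forces $e(G) < \max\{e(H(n,k,2)), e(H(n,k,r))\}$, contradicting the hypothesis, and when $a \in \{2, r\}$ the edge count forces $G = H(n,k,a)$ exactly. I expect the main obstacle to be the structural step: turning ``$C$ is a longest cycle'' into the precise common-window statement for all of $R$ requires a careful identification of which cycle vertices are ``reducible'' (roughly those $v_i$ with $v_{i-1}v_{i+1} \in E(G)$, which can be bypassed in $C$) and an iterative refinement showing these are the only possible $C$-neighbors of $R$. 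Getting this tight enough to recover cleanly the two extremal graphs---and not some intermediate hybrid---is the heart of Kopylov's original argument.
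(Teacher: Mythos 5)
This theorem is quoted by the paper as a known result of Kopylov~\cite{Kopylov77}; the paper gives no proof of it (it only invokes the special case $k=6$, $r=2$ via Corollary~\ref{COR: c_5}), so there is no in-paper argument to compare yours against, and your proposal has to stand on its own.

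As a proof it does not yet stand. The entire difficulty of Kopylov's theorem is concentrated in your ``crux'' paragraph, and the claims made there are either unproved or false as stated. From a longest cycle $C$ of length $c\le k-1$ and 2-connectivity one does get, for a \emph{single} vertex or component of $R=V(G)\setminus V(C)$, that its attachment points on $C$ are pairwise non-consecutive, hence at most $\lfloor c/2\rfloor$ of them. But your global claim --- that there is one common set $S$ with $|S|\le\lfloor (c-1)/2\rfloor$ receiving \emph{all} $C$-neighbors of \emph{all} of $R$, with $V(C)\setminus S$ clique-like --- is simply not a consequence of ``$C$ is longest'' plus 2-connectivity: different components of $G[R]$ can attach to entirely different, even disjoint, pairs of cycle vertices without creating a longer cycle. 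That conclusion can only be true once the hypothesis $e(G)\ge\max\{e(H(n,k,2)),e(H(n,k,r))\}$ is brought to bear, yet your sketch invokes the edge count only in the final convexity step, after the structure has already been assumed. The ``iterate between consecutive neighbors'' device bounds the neighborhood of one vertex, not the union $N_C(R)$, so it cannot deliver the common window. The convexity endgame itself is fine ($e(H(n,k,a))=\binom{k-a}{2}+a(n-k+a)$ is strictly convex in $a$, so the maximum over integers $a\in[2,r]$ is attained only at the endpoints), but it is the easy part. For reference, Kopylov's actual argument is organized quite differently: it runs by induction, deleting or contracting at a vertex of small degree while preserving 2-connectivity, together with a ``disintegration'' lemma (repeatedly stripping vertices of degree at most $\alpha$ and analyzing the surviving core) that converts a lower bound on $e(G)$ into a long cycle through a prescribed edge. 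Some such mechanism for trading edges against cycle length is exactly what is missing from your outline.
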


Note that when $k=6$, $r=\lfloor\frac{k-1}2\rfloor=2$. So we have the following corollary.
\begin{cor}\label{COR: c_5}
$H(n,6,2)$ is $\mathcal{C}_{\ge 6}$-saturated for $n\ge 6$.
\end{cor}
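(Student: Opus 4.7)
Recall that $H(n,6,2)$ consists of a copy of $K_4$ on vertex set $\{v_1,v_2,v_3,v_4\}$, together with $n-4$ additional vertices $u_1,\ldots,u_{n-4}$, each adjacent to precisely $\{v_1,v_2\}$ and to nothing else.

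The plan is to verify the two defining properties of saturation in turn. To show that $H(n,6,2)$ contains no cycle of length at least $6$, I would exploit the fact that each $u_i$ satisfies $N(u_i)=\{v_1,v_2\}$, so on any cycle through $u_i$ the two cycle-edges incident to $u_i$ are forced to be $v_1u_i$ and $u_iv_2$. Since each of $v_1,v_2$ has cycle-degree exactly $2$, at most two of the $u_i$'s can appear on any single cycle. A brief case split on the number of $u$-vertices used---namely zero, one, or two---then bounds the cycle length by $4$, $5$, and $4$ respectively, so no subgraph from $\mathcal{C}_{\ge 6}$ can exist.

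For the saturation direction, I would partition the non-edges into two classes: (a) pairs $u_iu_j$ with $i\ne j$, and (b) pairs $u_iv_\ell$ with $\ell\in\{3,4\}$. In case (a), the addition of $u_iu_j$ creates the $6$-cycle $u_i - u_j - v_1 - v_3 - v_4 - v_2 - u_i$. In case (b), choose any $u_j\ne u_i$---available because $n-4\ge 2$ whenever $n\ge 6$---and the addition of $u_iv_\ell$ creates the $6$-cycle $u_i - v_\ell - v_{\ell'} - v_1 - u_j - v_2 - u_i$, where $\{\ell,\ell'\}=\{3,4\}$. Both halves are routine verifications; the only point needing any care is confirming that a second vertex of $\overline{K_{n-4}}$ is available in case (b), which holds already at the smallest case $n=6$ where $n-4=2$.
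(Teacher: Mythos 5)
Your proof is correct, but it takes a genuinely different route from the paper. The paper obtains this statement as an immediate consequence of Kopylov's theorem (Theorem~\ref{THM:Koplov}): for $k=6$ one has $\lfloor\frac{k-1}{2}\rfloor=2$, so the two extremal graphs in that theorem coincide with $H(n,6,2)$; hence $H(n,6,2)$ itself has no cycle of length at least $6$, while adding any non-edge yields a $2$-connected graph with strictly more than $e(H(n,6,2))$ edges that is not isomorphic to $H(n,6,2)$, and so must contain a cycle of length at least $6$. You instead verify both halves of saturation by hand: your counting argument correctly shows that a cycle can pass through at most two vertices of $\overline{K_{n-4}}$ (forcing circumference at most $5$), and the explicit $6$-cycles you exhibit for the two classes of non-edges, $u_iu_j$ and $u_iv_\ell$ with $\ell\in\{3,4\}$, are both valid, with the availability of a second vertex $u_j$ guaranteed by $n\ge 6$. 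Your argument buys self-containedness and an explicit witness for each added edge, at the cost of a short case analysis; the paper's argument buys brevity and places $H(n,6,2)$ in the context of the extremal family for long cycles, at the cost of invoking a nontrivial external theorem.
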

The following theorem due to Whitney~\cite{Whitney32} characterizes the structure of 2-connected graphs.
Given a graph $H$, we call $P$ an {\em $H$-path} if $P$ is nontrivial and meets $H$ exactly in its ends. We call a path connecting vertices $u$ and $v$ a {\it $(u,v)$-path}.
\begin{thm}[Whitney, 1932]\label{THM: EAR-DECOM}
A graph is 2-connected if and only
if it can be constructed from a cycle by successively adding $H$-paths to graph $H$ already constructed.
\end{thm}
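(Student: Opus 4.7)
The plan is to prove the two implications separately, with the forward (sufficiency) direction being straightforward and the reverse (necessity) direction requiring the bulk of the work.

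For sufficiency, I would induct on the number of $H$-paths added. The base case is a cycle, which is clearly $2$-connected. For the inductive step, suppose $H$ is $2$-connected and $G'=H\cup P$ where $P$ is an $H$-path with endpoints $u,v\in V(H)$. I need to check that $G'-w$ is connected for every $w\in V(G')$. There are three subcases: (a) if $w$ is an internal vertex of $P$, then $H\subseteq G'-w$ is connected and each of the two sub-paths of $P-w$ remains attached to $H$ through $u$ or $v$; (b) if $w\in\{u,v\}$, say $w=u$, then $H-u$ is connected by $2$-connectedness of $H$, and $P-u$ is a path attached to $H-u$ at $v$; (c) if $w\in V(H)\setminus\{u,v\}$, then $H-w$ is connected and the interior of $P$ hangs off through both $u$ and $v$. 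In every case $G'-w$ is connected, and since $G'$ still contains a cycle through $P$, it has no cut vertex, hence $G'$ is $2$-connected.

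For necessity, let $G$ be $2$-connected. Because $G$ contains a cycle, choose any cycle $C$ and set $H_0=C$. I build a nested sequence $H_0\subsetneq H_1\subsetneq\cdots\subsetneq H_k=G$ where $H_{i+1}=H_i\cup P_i$ for some $H_i$-path $P_i$. Given $H_i\subsetneq G$, I produce an $H_i$-path as follows. If $V(H_i)=V(G)$, then there exists an edge $xy\in E(G)\setminus E(H_i)$, and the single edge $xy$ is (by the definition used here) an $H_i$-path. Otherwise pick $v\in V(G)\setminus V(H_i)$. Since $G$ is $2$-connected and $|V(H_i)|\ge 3$, the Fan Lemma (a direct consequence of Menger's theorem) produces two internally disjoint paths from $v$ to two distinct vertices of $H_i$; their union is an $H_i$-path through $v$. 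Since the sequence strictly grows and $G$ is finite, we eventually reach $H_k=G$.

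The main obstacle is the existence of an $H_i$-path through an outside vertex $v$ in the necessity direction: one must verify that $v$ can indeed be linked to $H_i$ by two internally disjoint paths meeting $H_i$ only at their endpoints. The cleanest way is to contract $V(H_i)$ to a single vertex $h^\ast$ in $G$, observe that the resulting multigraph is still $2$-connected (contracting a connected subgraph in a $2$-connected graph preserves $2$-connectedness, provided $|V(H_i)|\ge 2$), and apply Menger's theorem to find two internally disjoint $v$--$h^\ast$ paths, which lift back to the desired $H_i$-path. An alternative that avoids contraction is to take a shortest $v$--$V(H_i)$ path $Q_1$ ending at some $u\in V(H_i)$, then use $2$-connectedness of $G$ to find a $v$--$(V(H_i)\setminus\{u\})$ path $Q_2$ avoiding $u$, and finally trim $Q_1\cup Q_2$ at the first time the second walk re-enters $V(H_i)$ in order to obtain an $H_i$-path through $v$.
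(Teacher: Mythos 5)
Your proof is correct; note, however, that the paper does not prove this statement at all --- it is quoted as a classical theorem of Whitney (1932) with a citation to the original source, so there is no in-paper argument to compare against. What you have written is the standard ear-decomposition proof: induction on the number of added $H$-paths for sufficiency (the three-case check of $G'-w$ is complete, including the degenerate case where the $H$-path is a chord with no internal vertices), and for necessity the Fan Lemma to extend a proper $2$-connected subgraph by an $H$-path through an outside vertex, with chords handling the spanning case. The only point worth flagging is in your contraction variant: if exactly one vertex lies outside $H_i$, the contracted multigraph has only two vertices and is not $2$-connected in the usual sense, so that route needs the trivial observation that such a vertex has two neighbours in $H_i$; your primary fan-lemma argument avoids this issue entirely and is the cleaner choice.
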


Let $D(a,b)$ be the graph on $a+b+3$ vertices whose vertex set is $\{t_1,t_2,t_3\}\cup A\cup B$ with $|A|=a,|B|=b$ and
$$E(D(a,b))=\{t_1t_2,t_1t_3,t_2t_3\}\cup\{ut_1, ut_2 : u\in A\}\cup \{vt_1, vt_3 : v\in B\}\mbox{.}$$
We call $\{t_1, t_2, t_3\}$ the {\it center} of $D(a,b)$. Clearly, the center vertices have degree $a+b+2$, $a+2$ and $b+2$, respectively. It is easy to check that $D(a,b)$ is $\mathcal{C}_{\geq6}$-saturated if $a,b\ge 2$.
	



	
\begin{lem}\label{61}
Let $G$ be a $2$-connected graph on $n\geq6$ vertices. If $G$ is $\mathcal{C}_{\geq6}$-saturated , then $G$ is isomorphic to $H(n,6,2)$ for $n\ge 6$ or $D(a,b)$ for some $a,b\ge 2$ with $a+b+3=n$.
\end{lem}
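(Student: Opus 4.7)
The plan is to combine $2$-connectivity with the $\mathcal{C}_{\ge 6}$-saturation condition to force a rigid local structure at every vertex, then read off the two possible global pictures. First I would argue $\delta(G) = 2$: since $G$ is $2$-connected, $\delta(G) \ge 2$, and if $\delta(G) \ge 3$, a classical cycle theorem of Dirac (circumference $\ge \min\{n, 2\delta\} \ge 6$ for $2$-connected graphs with $n\ge 2\delta$) contradicts $\mathcal{C}_{\ge 6}$-freeness. Next, for any degree-$2$ vertex $v$ with $N(v) = \{x, y\}$, I claim $xy \in E(G)$: else, Fact~\ref{FACT: f2} gives an $(x,y)$-path $P$ of length at least $5$, and since $v$ has no neighbor outside $\{x, y\}$, $v \notin V(P)$, so $P$ together with the edges $vx, vy$ is a cycle of length at least $7$, contradicting $c(G) \le 5$. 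Call $\{x, y\}$ the \emph{base edge} of $v$.

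Using that any two vertices of a $2$-connected graph lie on a common cycle (a consequence of Theorem~\ref{THM: EAR-DECOM}), I would then show any two base edges intersect. Otherwise, degree-$2$ vertices $v_1, v_2$ with disjoint base edges $\{x, y\}, \{u, w\}$ lie on a common cycle $C$ consisting of two internally disjoint $(v_1, v_2)$-paths; each begins with one of $v_1 x, v_1 y$ and ends with one of $v_2 u, v_2 w$, so by disjointness each has length at least $3$, forcing $|C| \ge 6$. A similar argument rules out three base edges $\{a, b\}, \{b, c\}, \{a, c\}$ pairwise intersecting but with no common vertex, since the three corresponding degree-$2$ vertices yield the $6$-cycle $v_1\,a\,v_3\,c\,v_2\,b\,v_1$. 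Hence the base edges either all coincide as one edge $\{c_1, c_2\}$, or share a common vertex $t_1$.

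Finally I would split into these cases. In the first, I set $R = V(G) \setminus \{v : N(v) = \{c_1, c_2\}\}$, observe that every vertex $d \in R \setminus \{c_1, c_2\}$ has all its neighbors in $R$ and degree at least $3$, and use $c(G) \le 5$ with Fact~\ref{FACT: f2} to deduce $|R| = 4$ with $R$ inducing $K_4$, yielding $G = H(n, 6, 2)$. In the second, letting $t_1$ be the common vertex and $t_2, t_3, \ldots$ the other endpoints, I would show (using that $G - t_1$ must remain connected and that additional base-edge endpoints would either detach a component or falsify saturation for non-edges between the new degree-$2$ vertices) that there are exactly two other endpoints $t_2, t_3$, that $t_2 t_3 \in E(G)$, and that every vertex outside $\{t_1, t_2, t_3\}$ has degree $2$ with base edge $\{t_1, t_2\}$ or $\{t_1, t_3\}$, giving $G = D(a, b)$ for some $a, b \ge 2$.

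The principal obstacle is this last phase: ruling out stray vertices and edges outside the two prescribed templates. This requires repeatedly selecting non-edges, invoking Fact~\ref{FACT: f2} to extract a path of length at least $5$, and verifying that the path combined with the triangles and edges already identified closes into a cycle of length at least $6$, contradicting $c(G) \le 5$. The sub-arguments --- bounding the number of non-degree-$2$ vertices, forcing the edges among them, and (in the second case) limiting the number of distinct endpoints by exploiting the interaction between $2$-connectivity and saturation --- must each be handled carefully, and comprise the bulk of the work.
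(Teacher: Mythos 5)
Your opening moves coincide with the paper's: $\delta(G)=2$ via Dirac's circumference bound, and the observation that the two neighbours of every degree-$2$ vertex must be adjacent (your ``base edge'' is exactly the paper's claim that every degree-$2$ vertex lies in a triangle). From there you diverge: the paper splits on whether $G$ contains a $K_4$ and, in the negative case, locates a copy of $K_4^-$ whose complement is independent; you instead classify by the intersection pattern of the base edges. Your two intermediate lemmas --- any two base edges meet (two internally disjoint paths joining two degree-$2$ vertices with disjoint base edges each have length at least $3$), and no three base edges form a triangle without a common vertex (the explicit $6$-cycle $v_1av_3cv_2bv_1$) --- are correct, and they yield a clean dichotomy: a single repeated base edge, or at least two distinct base edges through a common vertex $t_1$. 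This is a genuinely different and rather elegant organizing principle, and it correctly separates the two target graphs.

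The gap is in the endgame, which you acknowledge but underestimate. In Case 1, the deduction ``$|R|=4$ with $G[R]\cong K_4$'' does not follow from $c(G)\le 5$ together with the path condition of Fact 2 as you invoke them: the guaranteed $(u,v)$-path of length at least $5$ between nonadjacent $u,v\in R$ may detour through a degree-$2$ vertex (and rerouting through the edge $c_1c_2$ costs a unit of length), while the two internally disjoint $(u,v)$-paths supplied by $2$-connectivity need not include that long path, so they cannot simply be glued into a long cycle. The paper circumvents this by first exhibiting a $K_4$ or $K_4^-$ and then running a Menger/fan argument (two disjoint paths from an external edge into the clique, concatenated with a length-$3$ path inside it); you have not yet produced such an anchor. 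In Case 2, your sketch bounds the number of endpoints $t_2,t_3,\dots$, but says nothing about vertices of degree at least $3$ that are adjacent to no degree-$2$ vertex at all --- nothing in your setup yet forbids a large $2$-connected block of high-degree vertices attached to $\{t_1,t_2,t_3\}$. Eliminating exactly this possibility is what the paper's $K_4^-$ claim and its repeated Menger arguments accomplish, and it constitutes the bulk of the proof. So the architecture is sound and novel, but the two terminal structural claims are asserted rather than derived, and the tools you name are not by themselves sufficient to derive them.
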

\begin{proof}
Since $G$ is $2$-connected, $\delta(G)\ge 2$. If $\delta(G)\ge 3$, then by Theorem~\ref{THM: Dirac52}, $G$ has a cycle of length at least $6$, a contradiction. Hence $\delta(G)=2$.

\begin{claim}\label{CL: 2-vertex}
Every vertex of degree two is contained in a triangle.
\end{claim}
Otherwise, suppose there is a vertex $v$ with $N_{G}(v)=\{u_1,u_2\}$ and $u_1u_2\notin E(G)$. By Fact~\ref{PROP: p2}, there is a $(u_1,u_2)$-path $P$ of length at least $5$ in $G$.
Clearly, $v\notin V(P)$. So $C=P+u_1vu_2$ is a cycle in $G$ of length greater than $6$, a contradiction.
\medskip

If $G$ contains a copy of $H$ isomorphic to $K_4$, we claim that $V(G)\setminus V(H)$ is an isolated set in $G$. If not, let $G'=G-V(H)$ and $e=uv$ is an edge in $G'$, then there are two vertex disjoint paths $P_1, P_2$ connecting $u,v$ and $V(H)$ by the well known Menger Theorem. But any pair of vertices in $H$ is connected by a path $P_3$ of length three. So $P_1+P_3+P_2+e$ is a cycle of length at least six, a contradiction. Therefore, all vertices of $V(G)\setminus V(H)$ must be of degree 2 and have common neighbors in $V(H)$ because $G$ is 2-connected and $\mathcal{C}_{\ge 6}$-free. This implies that $G\cong H(n,6,2)$, as desired.

Now suppose $G$ contains no $K_4$. By Claim~\ref{CL: 2-vertex} and  $\delta(G)=2$, $G$ must contain a triangle  with a vertex of degree $2$ and two vertices of degree at least $3$, say $u$ and $v$. Since $G$ is $2$-connected and $\mathcal{C}_{\ge 6}$-saturated, all $(u,v)$-paths must have length $2$ or $3$. Further by Claim~\ref{CL: 2-vertex} and 2-connectivity of $G$, it is easy to show that $G$ contains $K_4^-$ with a vertex of degree 2 as a subgraph.
\begin{claim}\label{CL: K_4-}
There is such a copy $H$ of $K_4^-$ with the property that $V(G)\setminus V(H)$ is an independent set.
\end{claim}
Let $V(H)=\{v_1, v_2, v_3, v_4\}$ and $H=K_4-\{v_2v_4\}$. Without loss of generality, assume $d_G(v_4)=2$.  Let $S=V(G)\setminus V(H)$. If $G[S]$ is not empty, since $G$ is $2$-connected, there is an $H$-path $P$ of length at least 3 connecting $v_i$ and $v_j$ for some $i,j\in\{1,2,3\}$. If there is a $(v_i,v_j)$-path $P'$ of length 3  in $H$, then $P+P'$ is a cycle of length at least $6$, a contradiction. So $P$ has to be of length three and $\{v_i, v_j\}=\{v_1, v_3\}$. Assume $P=v_1w_1w_2v_3$. Now it can be checked that every pair of vertices in $H\cup P$ is connected by a path of length at least three in $H\cup P$. So there is no $H\cup P$-path of length at length at least three in $G$. Therefore, $S'=V(G)\setminus(V(H\cup P))$ is an independent set in $G$.
If $d_G(w_i)\ge 3$ for $i=1,2$, let $w_i'\in N_G(w_i)\cap S'$. By the Menger Theorem, there are two internal vertex disjoint paths $P_1$ and $P_2$ connecting $w_1'$ and $w_2'$. Since $G[S']$ is empty, $P_i$ ($i=1,2$) must contain edges in $H\cup P$, which implies that $P_i$ ($i=1,2$) has length at least three and so $P_1\cup P_2$ is a cycle of length at least 6, a contradiction. Thus, at least one of $w_1, w_2$, say $w_1$, is of degree two. By Claim~\ref{CL: 2-vertex}, $v_1w_2\in E(G)$. Hence $\{v_1, w_1, w_2, v_3\}$ induces a copy $H'$ of $K_4^-$ with $d_G(w_1)=2$.  If $d_G(v_2)=2$ then $H'$ is a copy of $K_4^-$ as claimed.
Now suppose $d_G(v_2)\ge 3$ and $v_2'\in N_G(v_2)\cap S'$. Then $d_G(v_2')=2$ and $N_G(v_2')\setminus\{v_2\}\subset\{v_1, v_3, w_2\}$. But this is impossible, since, otherwise, we can find a cycle of length at least 6  because there is a path of length at least 4 connecting $v_2$ and any one of $\{v_1, v_3, w_2\}$ in $H\cup P$. The claim is true.

\medskip
Let $V(H)=\{v_1, v_2, v_3, v_4\}$ and $H=K_4-\{v_2v_4\}$ is a copy of $K_4^-$ as claimed in Claim~\ref{CL: K_4-}. Assume $d_G(v_4)=2$.  Let $S=V(G)\setminus V(H)$. Then each vertex of $S$ has degree 2 and has neighbors $v_i, v_j$ for some $i,j\in\{1,2,3\}$. Let $A=\{v : N_G(v)=\{v_1, v_2\}\}$, $B=\{v : N_G(v)=\{v_1, v_3\}\}$ and $C=\{v : N_G(v)=\{v_2, v_3\}\}$. Again since $G$ is $C_{\ge 6}$-free, at least one of $A, B, C$ is empty. Without loss of generality, assume $C=\emptyset$, $|A|=a$ and $|B|=b$, i.e. $G\cong D(a,b)$ for $a\ge 0$, $b\ge 0$. We claim that $a\ge 2$ and $b\ge 2$. Clearly, $v_4\in B$. If $a=0$, then $G\cong H(n, 5, 2)$, which is not $\mathcal{C}_{\ge 6}$-saturated. Without loss of generality, assume $a\ge b$.   If $B=\{v_4\}$, then the longest path connecting $v_2, v_4$ has length at most $4$, a contradiction, too. So we have $a\ge b\ge 2$. We are done.

\end{proof}

Let $B_2(G)$ be the set of blocks of $G$ isomorphic to $K_2$ and $b_2(G)=|B_2(G)|$.

\begin{lem}\label{LEM:cut}
Let $G$ be a $\mathcal{C}_{\ge r}$-saturated graph for $r\ge 4$. Then the following holds.

(a) Every block $B$ of $G$  is $\mathcal{C}_{\ge r}$-saturated. Specifically, each block $B$ with $|V(B)|<r$ is a complete graph.

(b) $B_2(G)$ forms a matching of $G$.

\end{lem}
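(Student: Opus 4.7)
For part (a), the plan is to first observe that $B$ is $\mathcal{C}_{\ge r}$-free because it is a subgraph of $G$. The substantive direction is showing that adding any non-edge $uv \in \binom{V(B)}{2} \setminus E(B)$ to $B$ already creates a cycle of length $\ge r$ inside $B$. Since $G$ is $\mathcal{C}_{\ge r}$-saturated, $G+uv$ contains some cycle $C$ of length $\ge r$, and $C$ must use the edge $uv$, so $P = C - uv$ is a $u$-$v$ path in $G$. The key lemma I would prove is: whenever $u,v$ lie in a common block $B$, every $u$-$v$ path in $G$ stays inside $V(B)$. Assuming $P$ leaves $V(B)$, pick a minimal subpath $P' = P[x,y]$ with $x,y \in V(B)$ and all interior vertices outside $V(B)$. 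Then $P'$ is a $B$-path, and by Whitney's ear-decomposition theorem (Theorem~\ref{THM: EAR-DECOM}), $B \cup P'$ is still $2$-connected, strictly larger than $B$, contradicting the maximality of the block $B$. (The only degenerate case is $B = K_2$, where there is no non-edge to worry about, so the claim is vacuous.) Hence $C - uv \subseteq B$, so $C \subseteq B + uv$, proving $B$ is $\mathcal{C}_{\ge r}$-saturated.

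For the ``specifically'' clause, suppose $|V(B)| < r$ and $B$ has some non-edge $e$. Any cycle in $B+e$ has length at most $|V(B)| < r$, which contradicts the saturation of $B$ we just established. Hence $B$ has no non-edges and is complete.

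For part (b), suppose for contradiction that two $K_2$-blocks share a vertex; that is, $uv$ and $vw$ are both bridges of $G$ (being $K_2$-blocks is equivalent to being a bridge). First, I would argue $uw \notin E(G)$: otherwise $\{u,v,w\}$ induces a triangle, so $uv$ and $vw$ would belong to a common $2$-connected block of order $\ge 3$, contradicting that they are $K_2$-blocks. Next, because $uv$ is a bridge, the only $u$-$v$ path in $G$ is the edge $uv$ itself (and similarly for $vw$), and since $v$ is a cut vertex separating $u$ from $w$, every $u$-$w$ path in $G$ must pass through $v$ and therefore decomposes into a $u$-$v$ path followed by a $v$-$w$ path. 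It follows that the unique $u$-$w$ path is $u \to v \to w$, of length $2$. But Fact~\ref{FACT: f2} requires a $u$-$w$ path of length at least $r-1 \ge 3$, a contradiction. Hence no two $K_2$-blocks share a vertex, so $B_2(G)$ is a matching.

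The main obstacle is the block-containment lemma in (a): one has to set up the subpath $P'$ correctly, verify that $P'$ has at least one internal vertex (so that $B \cup P'$ strictly enlarges $B$), and invoke ear-addition to preserve $2$-connectivity. Once that is in place, both parts follow from routine bridge/cut-vertex reasoning combined with Fact~\ref{FACT: f2}.
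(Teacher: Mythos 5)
Your proposal is correct and takes essentially the same approach as the paper: part (a) rests on the standard fact that every path between two vertices of a common block stays inside that block (which the paper simply asserts and you justify carefully via the $B$-path/ear-addition argument), and part (b) is the paper's cut-edge argument --- two incident bridges force the unique path between their outer endpoints to have length $2$, contradicting Fact~\ref{FACT: f2}. No gaps.
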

\begin{proof}
(a) Let $B$ be a block of $G$. Since $B$ is a maximal 2-connected subgraph of $G$, any cycle containing edges of $B$  and any path connecting two nonadjacent vertices in $B$ must be totally contained in $B$. Since $G$ is $\mathcal{C}_{\ge r}$-saturated, $B$ contains no cycle of length at least $r$, and any pair of  nonadjacent vertices in $B$ is connected by a path of length $r-1$ in $B$, i.e. $B$ is $\mathcal{C}_{\ge r}$-saturated too.

Specifically, if $|V(B)|<r$ then the longest path in $B$ has length no more than $r-1$. Hence $B$ contains no nonadjacent vertices, i.e., $B$ is a complete graph.

(b) Suppose there is a vertex $u$ incident with two blocks of $B_2(G)$, say $uv_1,uv_2$. Then $v_1v_2\notin E(G)$, otherwise $uv_1,uv_2$ is contained in the triangle $uv_1v_2u$, a contradiction to the fact that $uv_1, uv_2\in B_2(G)$.
So
there exists a $(v_1,v_2)$-path $P$ of length at least $r-1$  in $G$. However, both $uv_1$ and $uv_2$ are cut edges,  which forces that $uv_1,uv_2\in E(P)$, i.e., $P=v_1uv_2$, a contradiction to $|V(P)|=r\ge 4$.

\end{proof}

An {\it$(a,b,c,d,f)$-cactus}, denoted by $T(a,b,c,d,f)$, is a connected graph whose blocks consist of $a$ copies of $K_3$, $b$ copies of $K_4$, $c$ copies of $K_5$, $d$ members in $\{D(r,s) : r,s\ge 2\}$ and $f$ members in $\{H(t,6,2) : t\ge 6\}$.
\begin{lem}\label{62}
A  graph $G$ is $\mathcal{C}_{\ge6}$-saturated if and only if

(i) $G$ is connected and $B_2(G)$ forms a matching of $G$;

(ii) $G$ contains no $T(a, 0,0,0,0)$ with $a\ge 2$;

(iii) the center vertices of $D(r,s)$, $H(t,6,2)$ and the vertices of blocks $K_3, K_4$ can not incident with a cut edge;

(iv)  each component of $G-B_2(G)$ is isomorphic to $K_1$ or $T(a,b,c,d,f)$ for some $c+d+f\ge 1$ if $a+b\le 1$.
\end{lem}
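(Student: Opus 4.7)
The statement is an ``if and only if,'' so I treat the two directions separately, relying throughout on the block structure supplied by Lemmas~\ref{LEM:cut} and~\ref{61}. \emph{For the necessity}, assume $G$ is $\mathcal{C}_{\ge 6}$-saturated. Condition (i) is immediate from Fact~\ref{FACT: sm} and Lemma~\ref{LEM:cut}(b). Lemma~\ref{LEM:cut}(a) then says every block of $G$ is itself $\mathcal{C}_{\ge 6}$-saturated and every block on fewer than $6$ vertices is complete, so each block is one of $K_2,K_3,K_4,K_5$, or, on at least $6$ vertices via Lemma~\ref{61}, is $H(n,6,2)$ or $D(a,b)$ with $a,b\ge 2$. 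Consequently each component of $G-B_2(G)$ is either $K_1$ or a cactus $T(a,b,c,d,f)$.

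For (ii), if two triangles $\{u_1,u_2,v\},\{w_1,w_2,v\}$ share a cut vertex $v$, every $(u_1,w_1)$-path runs through $v$ and a short enumeration shows its length is at most $4$, contradicting Fact~\ref{FACT: f2}. For (iii) let $z$ be a vertex of the listed type and suppose $zw$ is a cut edge; I will produce a vertex $u$ inside the block containing $z$ for which every $(z,u)$-path has length at most $3$, so that every $(u,w)$-path, which must traverse $zw$, has length at most $4$, again contradicting Fact~\ref{FACT: f2}. The required short-path bounds are: inside a $K_3$-block any other vertex gives bound $2$; inside a $K_4$-block any other vertex gives bound $3$; inside $D(a,b)$ a suitable pair of centers (for instance $t_1,t_2$) gives bound $3$; inside $H(n,6,2)$ the two centers give bound $3$. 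The extra clause in (iv) is then forced by (i) and (iii): a single $K_3$- or $K_4$-component with $c+d+f=0$ would have to attach to the rest of $G$ through $B_2(G)$-edges, putting a $K_3$- or $K_4$-vertex at a cut edge, contrary to (iii); hence such a component must be all of $G$, impossible for $n\ge 6$.

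\emph{For the sufficiency}, assume (i)--(iv). Every cycle of $G$ is confined to a single block and no permitted block type contains a $C_{\ge 6}$, so $G$ is $\mathcal{C}_{\ge 6}$-free. Fix non-adjacent $u,v\in V(G)$; I must exhibit a $(u,v)$-path of length at least $5$. If $u$ and $v$ share a block $B$, then $B\in\{K_3,K_4,K_5,D(a,b),H(n,6,2)\}$ is itself $\mathcal{C}_{\ge 6}$-saturated (directly for the small complete graphs, and from Corollary~\ref{COR: c_5} or the structure recovered in Lemma~\ref{61} for the larger ones), and Fact~\ref{FACT: f2} applied inside $B$ delivers the path. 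Otherwise $u$ and $v$ are joined by a unique chain of blocks $B_0\ni u, B_1, \ldots, B_k\ni v$ in the block-tree of $G$, and I construct a $(u,v)$-path by concatenating, inside each $B_i$, a longest path between the two successive cut vertices (or between $u$/$v$ and the neighboring cut vertex at the endpoints).

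The hard part will be guaranteeing that this concatenated path has length at least $5$. The block-traversal contributions are $1$ for $K_2$, $2$ for $K_3$, $3$ for $K_4$, $4$ for $K_5$, and at least $3$ (typically $4$ or $5$, depending on which pair of vertices is used) for $D(a,b)$ and $H(n,6,2)$. The dangerous configurations are those in which both endpoint blocks contribute little, but condition (iii) forbids a $K_2$-block from meeting a center of $D$ or $H$ or a $K_3/K_4$-vertex, condition (ii) forbids two triangles sharing a cut vertex, and the extra clause of (iv) forces every non-trivial component of $G-B_2(G)$ either to contain two ``medium'' blocks or to contain at least one block of traversal length $\ge 4$. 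A finite case analysis on the endpoint blocks (and on whether the chain begins or ends with a $K_2$-block) then completes the proof.
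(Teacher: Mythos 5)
Your proof is correct and rests on the same structural inputs as the paper's: Lemma~\ref{LEM:cut} to make every block $\mathcal{C}_{\ge 6}$-saturated (hence complete when it has fewer than six vertices) and Lemma~\ref{61} to identify the large blocks as $H(t,6,2)$ or $D(a,b)$, so the two arguments agree on (i) and on the block classification behind (iv). Where you diverge is in how the contradictions for (ii) and (iii) are extracted: the paper adds the relevant non-edge $e$ and observes that any new long cycle would have to lie in the block of $G+e$ containing $e$, which is isomorphic to $D(r+1,s)$, $D(r,s+1)$, $H(t+1,6,2)$ or a graph on at most five vertices and hence has no $C_{\ge 6}$; you instead apply Fact~\ref{FACT: f2} to the same pair of vertices and bound the longest path between them, which is funnelled through the shared cut vertex (for (ii)) or the cut edge (for (iii)) and so has length at most $4$. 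The two mechanisms are equivalent (a cycle of length at least $6$ through $e=uv$ in $G+e$ is exactly a $(u,v)$-path of length at least $5$ in $G$), and your per-block path bounds check out, including the need to pick the second centre of $D(a,b)$ so that the longest path between the chosen pair is $3$ rather than $4$. On sufficiency both proofs stop at a deferred finite check --- the paper writes ``it can be checked,'' while you spell out the block-chain concatenation and the traversal contributions, which is the more informative sketch; the one imprecision is your claim that (iv) supplies a block of traversal length at least $4$, which fails for $D(a,b)$ and $H(t,6,2)$ between their centres (length $3$), but the argument survives because (iii) keeps cut edges away from centres and from $K_3$/$K_4$ blocks, so every traversal out of a pendant $K_2$ enters a $K_5$, $D$ or $H$ block at a non-centre vertex, from which the longest path to any other vertex of that block has length at least $4$.
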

\begin{proof}
{\bf Necessity:} (i) The connectivity of $G$ comes from the definition of $C_{\ge  r}$-saturation of $G$ and by Lemma~\ref{LEM:cut} (b), $B_2(G)$ forms a matching.

(ii) Otherwise, there are two triangles $B_1$ and $B_2$ such that $|V(B_1)\cap V(B_2)|=1$. Suppose $V(B_1)=\{v_1, v_2, x\}$ and $V(B_2)=\{u_1,u_2, x\}$. Then  $G+v_1u_1$ contains a cycle of length at least $6$. However, $v_1u_1$ is in a block of size $5$ in $G+v_1u_1$, which is a contradiction.

(iii) Suppose to the contrary that there is a cut edge $xy$ with $x$ be a center vertex of $D(r,s)$, $H(t,6,2)$ or a vertex of a block isomorphic to $K_3, K_4$. Choose $z$ to be a center vertex other than $x$ in $D(a,b)$ (if $x$ is of degree $r+2$ or $s+2$ then choose $z$ be the center vertex of degree $r+s+2$), $H(t,6,2)$ or a vertex other than $x$ of $K_3, K_4$. Note that $y$ is a cut vertex not contained in a same block with $z$. So $yz\notin E(G)$ and $G+yz$ should contain a cycle  of length at least $6$. However, the edge $yz$ is in a block of $G+yz$ isomorphic to $D(r+1,s), D(r,s+1), F(t+1, 6,2)$ or a block of size at most $5$, which contains no cycle of length at least $6$ by Lemma~\ref{61}, a contradiction.

(iv) We first show that every block of $G$ is isomorphic to one of $\{K_t : 1\le t\le5\}\cup\{D(r,s): r,s\ge 2\}\cup\{H(t,6,2) : t\ge 6\}$. Let $B$ be a block of $G$. If $|V(B)|\le 5$, then $B\cong K_t$ with $1\le t\le 5$ by Lemma~\ref{LEM:cut} (a) and we are done. Now suppose $|V(B)|\ge 6$. Then $B$ is a $2$-connected $\mathcal{C}_{\ge6}$-saturated graph on at least $6$ vertices. By Lemma~\ref{61}, either $B\cong D(r,s)$ with $r,s\ge 2$ or $B\cong H(t,6,2)$ with $t\ge 6$. We are done, too. If a component of $G-B_2(G)$ is $T(a,b, 0,0,0)$ then $a+b\ge 1$. If $a+b=1$, then, by (iii), $G$ must be $T(a,b,0,0,0)$, which  is of order at most 4, a contradiction.

\medskip
\noindent {\bf Sufficient:} By Lemma~\ref{61}, $G$ is $C_{\ge 6}$-free. It is sufficient to show that the addition of any non-edge to $G$ induces cycles of length at least 6. It can be checked that the component $T(a,b,c,d,f)$ of $G-B_2(G)$ is $\mathcal{C}_{\ge 6}$-saturated for $c+d+f\ge 1$ if $a+b\le 1$, and the addition of any non-edge between two components also gives a cycle of length at least $6$ by (iii).
\end{proof}

The following lemma gives the lower bound of $\sat(n, C_{\ge 6})$.
\begin{lem}\label{63}
For $n\ge6$,
     \begin{equation*}
     	sat(n,\mathcal{C}_{\ge6})\begin{cases}
     		=9   & n=6;\\
     		=11   & n=7;\\
     		=12   & n=8;\\
     		=13   & n=9;\\
     		\ge\left\lceil \frac{3(n-1)}{2}\right\rceil   & n\ge10.
     	\end{cases}
     \end{equation*}
\end{lem}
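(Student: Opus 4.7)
I will reduce the lower bound to a weighted count over the block--cut tree of $G$, then handle the small cases $n\in\{6,7,8,9\}$ directly.

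\textbf{Step 1: A block weight identity.} For every block $B$ of $G$ set $w(B)=2e(B)-3(v(B)-1)$, which evaluates to $-1,\,0,\,3,\,8,\,r+s,\,t-1$ on $K_2,K_3,K_4,K_5,D(r,s),H(t,6,2)$ respectively. Using $\sum_B v(B)=n+b-1$ (with $b$ the total number of blocks) gives the identity
\[
2|E(G)|-3(n-1)\ =\ \sum_{B\text{ block of }G} w(B)\ =\ \Bigl(\sum_{B\not\cong K_2}w(B)\Bigr)-m_2,
\]
where $m_2=|B_2(G)|$. The target $|E(G)|\ge\lceil 3(n-1)/2\rceil$ is therefore equivalent to $\sum_{B\not\cong K_2}w(B)-m_2\ge 0$ when $n$ is odd and $\ge 1$ when $n$ is even.

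\textbf{Step 2: The bound for $n\ge 10$.} By Lemma~\ref{62}(i) and~(iv) the components of $G-B_2(G)$ are pendants of $G$ ($p$ of them) or cacti of type $T(a,b,c,d,f)$ ($q$ of them). Since $B_2(G)$ is a matching, successively removing the bridges of $G$ gives $m_2=p+q-1$. For each cactus $C$ let $x_C$ be the number of its vertices that are endpoints of $K_2$-blocks of $G$; by Lemma~\ref{62}(iii) these can only be vertices of $K_5$-blocks or non-center vertices of $D$- or $H$-blocks of $C$. A block-by-block comparison yields
\[
w(C):=\sum_{B\subseteq C}w(B)\ \ge\ x_C,
\]
with slack at least $1$ whenever $C$ contains a $K_4$, $K_5$, or $H$ block, or whenever some non-center vertex of a $D$-block of $C$ is not external. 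Summing over $C$ and using $\sum_C x_C=2m_2-p$ gives $\sum_{B\not\cong K_2}w(B)-m_2\ge (2m_2-p)-m_2=q-1\ge 0$, since $G$ is not a tree. When $n$ is even and $q=1$, equality would force $C$ to contain only $K_3$- and $D$-blocks with every non-center $D$-vertex external; but then a direct count yields $n=1+2n_3+2n_D+2\sum_D(r+s)$, which is odd, a contradiction. Hence the extra $+1$ appears, and $|E(G)|\ge\lceil 3(n-1)/2\rceil$ for all $n\ge 10$.

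\textbf{Step 3: The small cases $n\in\{6,7,8,9\}$.} Upper bounds come from explicit constructions: the one-point union of $K_3$ and $K_4$ ($n=6$, $9$ edges), $D(2,2)$ ($n=7$, $11$ edges), and $D(2,2)$ with one or two pendant $K_2$-blocks attached at non-center vertices ($n=8$, $12$ edges; $n=9$, $13$ edges). For the lower bounds the identity of Step~1 falls short by $1$ or $2$; I close the gap by enumerating, via Lemma~\ref{62}, the finitely many cacti on at most $9$ vertices obeying conditions~(ii)--(iv), the admissible placements of pendant $K_2$-blocks permitted by~(iii), and computing $\sum w(B)-m_2$ in each configuration.

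\textbf{Main obstacle.} The global weight argument handles $n\ge 10$ cleanly, but is $1$ or $2$ edges short for $n\le 9$. The principal work is therefore the exhaustive (but finite) case analysis of Step~3, where one must carefully track which non-center vertices of $D$- and $H$-blocks may legitimately carry a $K_2$-bridge under Lemma~\ref{62}(iii).
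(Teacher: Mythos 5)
Your argument is correct and is essentially the paper's own proof in different bookkeeping: the weight identity $2|E(G)|-3(n-1)=\sum_B w(B)$ combined with the attachment constraint from Lemma~\ref{62}(iii) (cut edges may only meet $K_5$-vertices and non-center vertices of $D$- or $H$-blocks) is exactly the paper's computation $|E(G)|=\tfrac{3}{2}\bigl(|V(G)|-1\bigr)+\tfrac{1}{2}\bigl(-b_2+3b_4+8b_5+\sum_i(r^i+s^i)+\sum_j(t^j-1)\bigr)$ together with the bound $b_2\le 5b_5+\sum_i(r^i+s^i)+\sum_j(t^j-2)$. Both treatments then dispose of $n\in\{6,7,8,9\}$ by the same finite enumeration of block configurations allowed by Lemma~\ref{62} (the paper likewise only sketches this step via its Figure~\ref{sat6}), so nothing essential differs.
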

\begin{proof}
Let $G$ be a minimum $\mathcal{C}_{\ge 6}$-saturated graph on $n$ vertices. Let $b_2(G)$, $b_3(G)$, $b_4(G)$, $b_5(G)$, $b(G)$ and $b^*(G)$ denote the number of blocks isomorphic to $K_2$, $K_3$, $K_4$, $K_5$ and members in  $\{D(r,s) : r,s\ge 2\}$ and members in $\{H(t,6,2) : t\ge 6\}$, respectively. Suppose all the blocks of the form $D(r,s)$ are $\{D(r_G^i, s_G^i): i\in[b(G)]\}$ and all the blocks of the form $H(t,6,2)$ are $\{F(t_G^j, 6,2) : j\in[b^*(G)]\}$. In the following, we write $r^i, s^i$ and $t^j$ for $r_G^i, s_G^i$ and $t^j_G$. By Lemma~\ref{62}, each component of $G-B_2(G)$ is isomorphic to $K_1$ or $T(a,b,c,d,f)$. Let $C(G)$ be the set of all components of $G-B_2(G)$. For each component $H\in C(G)$, we have $|V(H)|=1+2a+3b+4c+\sum_{i=1}^d(r_H^i+s_H^i+2)+\sum_{j=1}^{f}(t_H^j-1)$. Since $B_2(G)$ is a matching, the number of components in $G-B_2(G)$ is $b_2(G)+1$. So
$$|V(G)|=\sum_{H\in C(G)}|V(H)|=(b_2+1)+2b_3+3b_4+4b_5+\sum_{i=1}^{b(G)}(r^i+s^i+2)+\sum_{j=1}^{b^*(G)}(t^j-1)\mbox{.}$$
By (i) and (iii) of Lemma~\ref{62},
$$b_2\le 5b_5+\sum_{i=1}^{b(G)}(r^i+s^i)+\sum_{j=1}^{b^*(G)}(t^j-2)\mbox{.}$$
Therefore,
\begin{equation*}
\begin{split}
|E(G)|&=b_2+3b_3+6b_4+10b_5+\sum_{i=1}^{b(G)}(2r^i+2s^i+3)+\sum_{j=1}^{b^*(G)}(2t^j-2)\\
      &=\frac{3}{2}\left(b_2+2b_3+3b_4+4b_5+\sum_{i=1}^{b(G)}(r^i+s^i+2)+\sum_{j=1}^{b^*(G)}(t^j-1)\right)\\
      &+\frac{1}{2}\left(-b_2+3b_4+8b_5+\sum_{i=1}^{b(G)}(r^i+s^i)+\sum_{j=1}^{b^*(G)}(t^j-1)\right)\\
      &\ge \frac{3}{2}(|V(G)|-1)+\frac{1}{2}\left(3b_4+3b_5+b^*(G)\right)\\
      &\ge\frac{3}{2}(n-1).
\end{split}
\end{equation*}
Thus we have $|E(G)|\ge\lceil\frac{3}{2}(n-1)\rceil$ for $n\ge 10$.

For $6\le n\le 9$, since $r^i, s^i\ge 2$ and $t^j\ge 6$, we have $$1+b_2+2b_3+3b_4+4b_5+6b(G)+5b^*(G)\le n\le 9.$$
Hence we can list all of the $C_{\ge 6}$-saturated graphs of order $n=6,7,8,9$ and compare the number of edges of them.
All the minimum $C_{\ge 6}$-saturated graphs of order $n$ with $6\le n\le 11$ are listed in Figure~\ref{sat6}. This completes the proof.
\begin{figure}[h]
\centering
\includegraphics[width=5.5in]{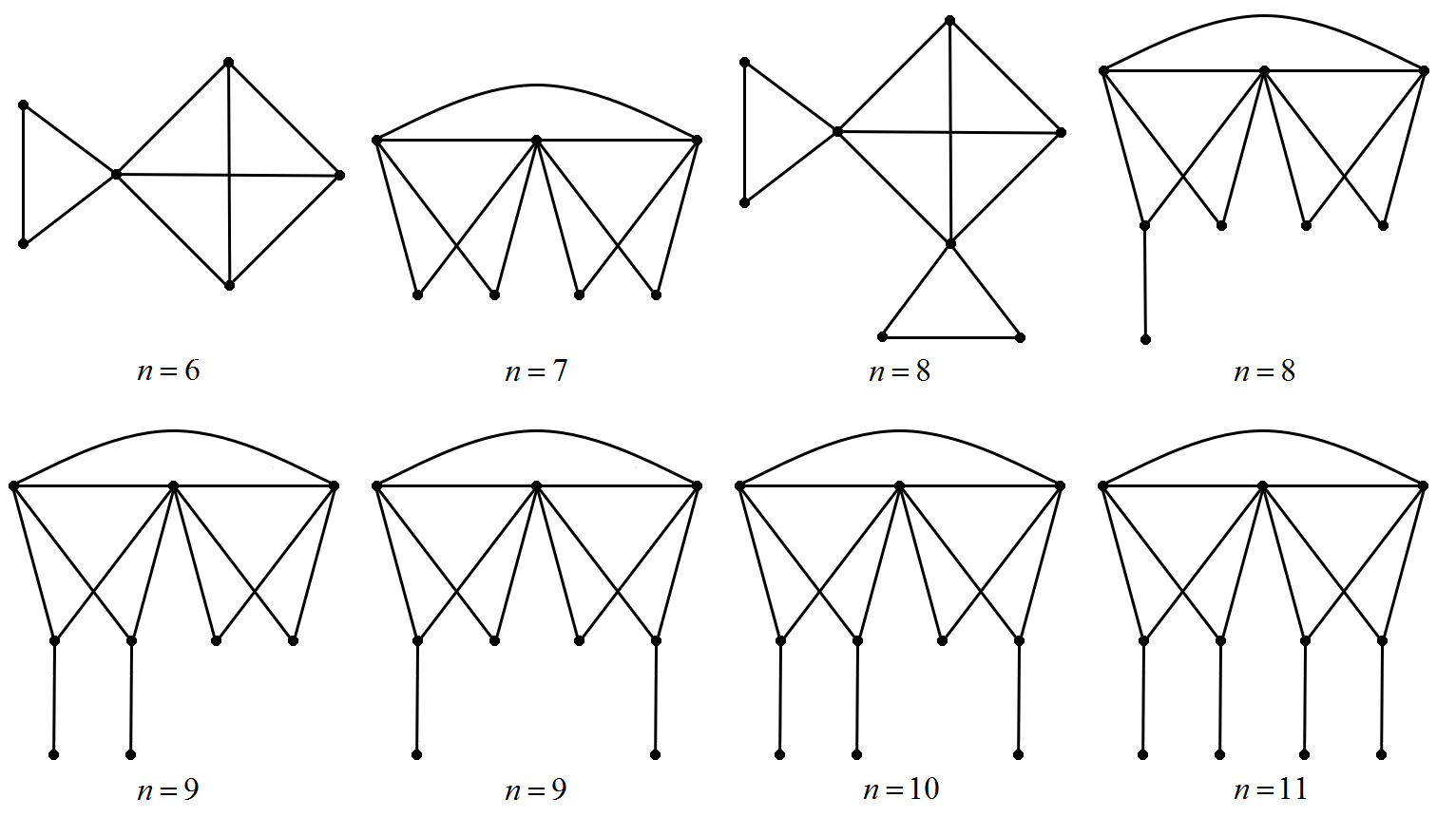}
\caption{All minimum saturated graphs for $n=6,7,8,9,10,11$.}\label{sat6}
\end{figure}
\end{proof}

To complete the proof of Theorem~\ref{THM: 6}, it is sufficient to construct a $\mathcal{C}_{\ge 6}$-saturated graph of order $n\ge 10$ and $\lceil\frac{3}{2}(n-1)\rceil$ edges.
For odd $n\ge 10$, let $M_{6,n}$ be the graph obtained from $D(\frac{n-7}{2},2)$ by pending a leaf to each of its vertex except the center. It is easy to check that $|V(M_{6,n})|=n$ and $|E(M_{6,n})|=\lceil\frac{3}{2}(n-1)\rceil$. By Lemma~\ref{62}, $M_{6,n}$ is a $\mathcal{C}_{\ge 6}$-saturated graph and we are done.
For even $n\ge 10$, let $M_{6,n}$ be obtained by deleting one leaf from $M_{6,n+1}$. Again by Lemma~\ref{62}, $M_{6,n}$ is a $\mathcal{C}_{\ge 6}$-saturated and can be checked that  $E(M_{6,n})=\lceil\frac{3}{2}(n-1)\rceil$.

\section{Structural theorem for $\mathcal{C}_{\ge r}$-saturated graphs and a new lower bound}
For a graph $G$ and  a subset $X\subseteq V(G)$, let $\delta_{G}(X)=\min\{d_{G}(v) : v\in X\}$ and $\Delta_{G}(X)=\max\{d_{G}(v): v\in X\}$. We write $d_{G}(X)=d$ for short if $\delta_{G}(X)=\Delta_{G}(X)=d$. Let $\overline{d}_{G}(X)=\frac{1}{|X|}\sum_{v\in X}d_{G}(v)$ be the average degree of $X$. Let $N_G(X)$ be the set of neighbors of $X$ out of $X$.
For a graph $G$ and two disjoint vertex sets $U, W\subset V(G)$, let $G[U]$ be the subgraph induced by $U$, and $G[U, W]$ be the bipartite subgraph of $G$ with vertex classes $U, W$ and edge set $$E_G[U,W]=\{uv\in E(G):u\in U\mbox{ and }v\in W\}\mbox{.}$$

The following lemma characterised the $C_{\ge 4}$-saturated graphs and will be used in the proofs of this section.
\begin{lem}[Proposition 2.12 in~\cite{Subdivision12}]\label{LEM: 4str}
A graph $G$ is $\mathcal{C}_{\ge 4}$-saturated if and only if

(1) $B_2(G)$ forms a matching of $G$;

(2) every component of $G-B_2(G)$ is isomorphic to $K_1$ or $T(t,0,0,0,0)$ for some $t\ge 1$.

\end{lem}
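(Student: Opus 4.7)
\emph{Proof plan.} The plan is the standard necessity-then-sufficiency analysis, paralleling the proof of Lemma~\ref{62} (the $r=6$ structural lemma) but much shorter because the only admissible non-trivial blocks here are triangles.

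For necessity, assume $G$ is $\mathcal{C}_{\ge 4}$-saturated. Connectedness is Fact~\ref{FACT: sm}, and (1) is immediate from Lemma~\ref{LEM:cut}(b) with $r=4$. For (2) I apply Lemma~\ref{LEM:cut}(a): every block of $G$ is itself $\mathcal{C}_{\ge 4}$-saturated, and any block on fewer than $4$ vertices is complete, hence $K_1$, $K_2$, or $K_3$. I then rule out any block $B$ with $|V(B)|\ge 4$ by observing that $B$ is $2$-connected, so either $B$ is complete (and $K_n$ with $n\ge 4$ already contains $C_4$) or $B$ has a non-adjacent pair, which by $2$-connectivity lies on a common cycle of length $\ge 4$. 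Either outcome contradicts $\mathcal{C}_{\ge 4}$-freeness. Hence every block is $K_1$, $K_2$, or $K_3$, and deleting $B_2(G)$ leaves each component either as a $K_1$ or as a connected graph whose blocks are all triangles, i.e.\ a $T(t,0,0,0,0)$ with $t\ge 1$.

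For sufficiency, conditions (1) and (2) force every cycle of $G$ to lie inside a single block of size $\le 3$, so $G$ is $\mathcal{C}_{\ge 4}$-free. It remains to check that adding any non-edge $uv$ creates a $C_{\ge 4}$, which I split into two cases. If $u,v$ lie in the same component $T=T(t,0,0,0,0)$ of $G-B_2(G)$, then necessarily $t\ge 2$ and the block-tree path from $u$ to $v$ traverses $k\ge 2$ triangles; the direct route has length $k$, but detouring through the third vertex of the first triangle yields a $(u,v)$-path of length $k+1\ge 3$, so adding $uv$ produces a cycle of length $\ge 4$. If instead $u,v$ lie in different components, the only risky situation is a length-$2$ shortest path $u-w-v$: since $u,v$ are in different components, at least one of $uw,wv$ is a bridge, and since $B_2(G)$ is a matching not both can be; hence WLOG $uw$ is a bridge while $wv$ lies in a triangle $\{w,v,w'\}$, giving the length-$3$ path $u-w-w'-v$ and thus a $C_4$ on adding $uv$.

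The only delicate point, and what I expect to be the main obstacle, is the length-$2$ sub-case just described: one must combine the matching property of $B_2(G)$ (ruling out two bridges at $w$) with the triangle structure at $w$ (supplying the necessary detour vertex $w'$) in order to manufacture a $(u,v)$-path of length $\ge 3$.
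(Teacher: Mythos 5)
The paper does not prove this lemma at all: it is imported verbatim as Proposition~2.12 of Ferrara et al.~\cite{Subdivision12}, so there is no in-paper proof to compare against. Your argument is correct and self-contained, and it follows the same template the authors use for their own $r=6$ analogue (Lemma~\ref{62}): necessity via Fact~\ref{FACT: sm} and Lemma~\ref{LEM:cut}, plus the observation that a $2$-connected block on at least $4$ vertices always contains a cycle of length at least $4$; sufficiency via an explicit detour path of length at least $3$, where your treatment of the distance-$2$ case (exactly one of the two edges at $w$ is a bridge because $B_2(G)$ is a matching, and the other lies in a triangle supplying the detour vertex $w'$) is exactly the right point to isolate. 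The one thing to make explicit is that connectedness of $G$ must be assumed in the sufficiency direction: conditions (1) and (2) as quoted do not force it (two disjoint triangles satisfy both but are not saturated), and your Case~2 tacitly uses that $u$ and $v$ are joined by some path. The paper's own $r=6$ version lists connectedness as part of condition (i), and the $r=4$ statement should be read the same way; with that reading your proof is complete.
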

The following lemma gives the structure of a $\mathcal{C}_{\ge r}$-saturated graph for $r\ge 6$.

\begin{figure}[h]
\centering
\includegraphics[width=3in]{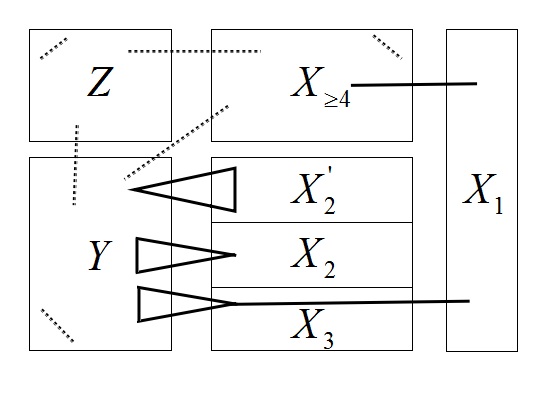}
\caption{The structure of a $\mathcal{C}_{\ge r}$-saturated graph for $r\ge 6$}\label{strc}
\end{figure}

\begin{lem}\label{THM: structure}
Let $G$ be a $\mathcal{C}_{\ge r}$-saturated graph on $n$ vertices for  $n\ge r\ge 6$.
Let $X_1$ be the set of leaves in $G$ and $X_3=\{v\in V(G): d_{G}(v)=3 \mbox{ and } v\in N_{G}(X_1)\}$ and $X_{\ge 4}=\{v\in V(G): d_{G}(v)\ge 4 \mbox{ and } v\in N_{G}(X_1)\}$.
{Let $X_2'$ be the set of vertices of degree two with at least one neighbor of degree two and $X_2$ be the rest of the vertices of degree two.}
Let $Y=N_G(X_2'\cup X_2\cup X_3)\setminus X_1$ and $Z$ be the set of remaining vertices in $G$.
Then  the following hold.

(i) $G[X_1]$, $G[X_1, X_2\cup X_2'\cup Y\cup Z]$, $G[X_2\cup X_3]$, $G[X_2\cup X_3, X_2']$, $G[X'_2\cup X_2\cup X_3, X_{\ge 4}\cup Z]$  are all empty graphs;

(ii) Both $G[X_2']$ and $G[X_1, X_3\cup X_{\ge 4}]$ are perfect matchings;

(iii) For each $uv\in G[X_2']$, there is a $w\in Y$ such that $ w\in N_G(u)\cap N_G(v)$;

(iv) If $Y, Z\cup X_{\ge 4}\neq\emptyset$ then $E_G[Y, Z\cup X_{\ge 4}]\not=\emptyset$;

(v) For each vertex of $X_2\cup X_3$, its two neighbors in $Y$ are adjacent.

(vi) {Let $Y_1$ be the set of isolated vertices} in $G[Y]$ and $Y_2=Y\setminus Y_1$. Let $H=G[Y\cup Z\cup X_{\ge 4}]$. Then $\delta_{H}(Y)\ge 2$ and  $\overline{d}_{H}(Y_2)\ge\frac{5}{2}$.

The structure of $G$ is shown in Figure~\ref{strc}.
\end{lem}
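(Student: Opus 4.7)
My plan is to establish (i)--(vi) using three tools from Section 2: (a) $B_2(G)$ is a matching and every block of size less than $r$ is a complete graph (Lemma 2.6), (b) any two nonadjacent vertices of $G$ are joined by a path of length at least $r-1$ (Fact 2.3), and (c) $G$ is connected (Fact 2.2). The critical preliminary observation is that every neighbor of a leaf has degree at least $3$: if the neighbor $u$ of a leaf $v$ had degree $2$ with other neighbor $w$, then both $uv$ and $uw$ would be $K_2$-blocks sharing $u$, violating (a). Hence every neighbor of a leaf lies in $X_3\cup X_{\ge 4}$, which immediately yields $G[X_1]=\emptyset$, $G[X_1,X_2\cup X_2'\cup Y\cup Z]=\emptyset$, and the perfect matching structure on $G[X_1,X_3\cup X_{\ge 4}]$ in (ii) (since a high-degree vertex adjacent to two leaves would again violate (a), each $X_3\cup X_{\ge 4}$-vertex has exactly one leaf partner).

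The remaining emptiness claims in (i), the matching $G[X_2']$ in (ii), and (v) all follow one template. For (v), if $u\in X_2\cup X_3$ has non-leaf neighbors $a,b\in Y$ (the containment in $Y$ is a consequence of (i)) with $ab\notin E(G)$, then (b) gives an $(a,b)$-path $P$ of length at least $r-1$; since $u$ has only $\{a,b\}$ or $\{a,b,\ell\}$ (with $\ell$ a leaf) as neighbors and $\ell$ cannot lie on $P$, the only way $u$ could be on $P$ is via the subpath $aub$, forcing $|P|=2$ and contradicting $|P|\ge r-1$. Hence $u\notin V(P)$ and $P+aub$ is a cycle of length at least $r+1$. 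Each remaining emptiness claim in (i) follows by the same recipe: choose a small-side vertex (a leaf or a degree-$2$ partner) at one endpoint $x$ of the forbidden edge and a vertex $z$ at the other endpoint $y$ nonadjacent to it, apply (b), and argue via the constrained neighborhood of $x$ that the resulting path cannot re-traverse $x$, so closing through $xy$ yields a cycle of length at least $r$. The matching $G[X_2']$ in (ii) is obtained similarly: a degree-$2$ vertex with two distinct degree-$2$ neighbors would force a 2-connected block that by (a) and the degree constraints must be $K_3$, and then saturation inside the block combined with (b) gives a cycle of length at least $r$, a contradiction.

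For (iii), given $uv\in G[X_2']$, let $w,w'$ be the other neighbors of $u,v$. If $w=w'$, then $w\in N_G(u)\cap N_G(v)$ lies in $Y$ by (i) and we are done. If $w\ne w'$, then both edges at each of $u,v$ lie in a single 2-connected block (by (a)) and these blocks share $uv$, so all four vertices lie in a single 2-connected block $B$. Applying the saturation of $B$ to the non-edge $uw'$ (when $ww'\in E(G)$, using that $uvw'$ and $uww'$ are short $(u,w')$-routes) or applying (b) to the pair $(w,w')$ directly in $G$ (when $ww'\notin E(G)$, closing with $wuvw'$) produces a cycle of length at least $r$, contradiction. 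For (iv), if $E_G[Y,Z\cup X_{\ge 4}]=\emptyset$, the emptiness claims of (i) force $V(G)$ to split into $(Y\cup X_2\cup X_2'\cup X_3)$ and $(X_1\cup X_{\ge 4}\cup Z)$ with no edges between them, contradicting (c).

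For (vi), each $w\in Y$ is adjacent to some $u\in X_2\cup X_2'\cup X_3$; applying (v) (when $u\in X_2\cup X_3$) or (iii) (when $u\in X_2'$) produces either a $Y$-neighbor of $w$ (contributing directly to $d_H(w)$) or a second low-degree vertex in a common triangle with $w$. Analyzing the 2-connected block $B_w$ containing this triangle --- either $B_w=K_3$, in which case $w$ must be a cut vertex and $d_G(w)\ge 3$ supplies additional neighbors in other blocks which lie in $H$ (since $w$ has no leaf neighbors); or $|B_w|\ge r$, in which case $w$ has many internal neighbors --- yields $\delta_H(Y)\ge 2$. The bound $\overline{d}_H(Y_2)\ge 5/2$ is a charging argument: each $w\in Y_2$ inherits one $G[Y]$-neighbor from the (v)/(iii) construction and, on average, a further half-neighbor extracted from the block saturation of $B_w$. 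The main obstacle throughout is the case analysis for the emptiness claims in (i): for each candidate forbidden edge one must enumerate how the Fact-2.3 path can approach the distinguished low-degree vertex and verify that each approach is blocked by the vertex's constrained neighborhood.
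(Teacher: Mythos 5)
Your treatment of (i)--(v) follows essentially the same route as the paper (the matching property of $B_2(G)$, the length-$(r-1)$ path between nonadjacent vertices, and connectivity), and the template you describe is sound. The genuine problems are concentrated in (vi). First, for $\delta_H(Y)\ge 2$: when the triangle through $w$ is a block isomorphic to $K_3$, you assert that $d_G(w)\ge 3$ ``supplies additional neighbors in other blocks which lie in $H$ (since $w$ has no leaf neighbors).'' Having no leaf neighbors does not place the remaining neighbors in $H=G[Y\cup Z\cup X_{\ge 4}]$: they may lie in $X_2\cup X_2'\cup X_3$. For instance, every block at $w$ could be a triangle through a pair of $X_2'$, giving $d_H(w)=0$; or $w$ could have exactly one $H$-neighbor $u$ with all other neighbors in $X_2\cup X_3$, giving $d_H(w)=1$. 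The paper excludes these configurations by identifying the resulting component or block as $T(t,0,0,0,0)$ or $H(k,4,2)$ and showing that neither is $\mathcal{C}_{\ge r}$-saturated for $r\ge 6$; your dichotomy ``$B_w=K_3$ or $|B_w|\ge r$'' never reaches this step, so the bound $\delta_H(Y)\ge 2$ is not actually proved.

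Second, the bound $\overline{d}_{H}(Y_2)\ge\frac{5}{2}$ is asserted via ``each $w\in Y_2$ inherits one $G[Y]$-neighbor \dots\ and, on average, a further half-neighbor extracted from the block saturation.'' The first half is vacuous ($Y_2$ is by definition the set of non-isolated vertices of $G[Y]$), and the second half is not an argument: vertices of $Y_2$ with $d_H=2$ sit strictly below the average $\frac{5}{2}$, so one must control how many there are and show they are compensated by their neighbors. The paper's discharging rests on four structural claims that you would need to state and prove: (a) a vertex of $Y_2$ with $d_H=2$ has adjacent neighbors; (b) no two such vertices are adjacent; (c) a vertex of $Y_2$ with $d_H(v)\ge 3$ has at most $d_H(v)-1$ neighbors of $H$-degree $2$ in $Y_2$; (d) no $3$-vertex of $Y_2$ is adjacent to two ``boundary'' $2$-vertices (those whose second neighbor lies in $Z\cup X_{\ge 4}$). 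Each requires its own path/cycle argument, and without them no averaging of the kind you describe can close. Until these (or equivalents) are supplied, part (vi) remains unestablished.
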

\begin{proof}
(i).
By definition of $X_1$, a component of $G[X_1]$ is either an edge or an isolated vertex.  Since $G$ is connected and $n\ge r\ge 6$, $X_1$ must be an independent set of $G$.

By definition of $Y$ and $Z$, $G[X_1, Y\cup Z]$ is an empty graph. Clearly, every vertex of $X_1$ is contained in a block isomorphic to $K_2$.  If there exists a vertex $v\in N_{G}(X_1)\cap(X_2\cup X_2')$, then $v$ is a cut vertex of $G$ and so $v$ is contained in two adjacent blocks each of which is isomorphic to $K_2$, a contradiction to (b) of Lemma~\ref{LEM:cut}.
Therefore, $G[X_1, X_2\cup X_2']$ is empty.

By definition, $G[X_2]$ is empty.
Suppose there is an edge $uv\in E(G[X_2\cup X_3])$ and $u\in X_3$.  Let $u'$ be the leaf adjacent to $u$.
Since $v\in X_2\cup X_3$ and $G[X_1, X_2]$ is empty, $v$ must have a non-leaf neighbor, say $w$. Then $u'w\notin E(G)$. Thus there is a $(u',w)$-path $P$ of length at least $r-1\ge 5$  in $G$. Clearly, $v\notin V(P)$, otherwise $P=wvuu'$ is of length three, a contradiction. So $P-u'u+uvw$ is a cycle of length at least $r$ in $G$, a contradiction to $G$ is $\mathcal{C}_{\ge r}$-saturated. Therefore, $G[X_2\cup X_3]$ is an empty graph. With a similar discussion, we have that there is no edge $uv$ with $u\in X_3$ (or $u\in X_{\ge 4}$) and $v\in X_2'$ (or $v\in X_2'\cup X_2\cup X_3$). That is $G[X_3, X_2']$ (or $G[X_2'\cup X_2\cup X_3, X_{\ge 4}]$) is empty. Since $E(G[X_2, X_2'])$ is empty by definition, we have $G[X_2\cup X_3, X_2']$ is an empty graph.
By definition, $G[X_2'\cup X_2\cup X_3, Z]$ is empty. So $G[X_2'\cup X_2\cup X_3, X_{\ge 4}\cup Z]$ is empty too. The proof of (i) is complete.
By (i), we have $X_3\cup X_{\ge 4}=N_G(X_1)$ and $X_1,X_2,X_2',X_3,X_{\ge 4},Y,Z$ form a partition of $V(G)$.

(ii). By the definition and (b) Lemma~\ref{LEM:cut}, $G[X_1, X_3\cup X_{\ge 4}]$ is a matching.
To complete (ii), we prove that $\Delta(G[X_2'])=\delta(G[X_2'])=1$.  By definition,  $\delta(G[X_2'])\ge 1$. Suppose there exists a vertex $v\in X_2'$ having two neighbors in $X_2'$, say $u_1,u_2$. Then $u_1u_2\notin E(G)$, otherwise, $G[\{v,u_1,u_2\}]$ forms a component of $G$, a contradiction to the connectivity of $G$. So $G$ contains a $(u_1, u_2)$-path $P$ of length at least $r-1\ge 5$. Clearly, $v\notin V(P)$, otherwise, $P=u_1vu_2$ is of length three, a contradiction. So $P+u_1vu_2$ is a cycle in $G$ of length at least $r+1$, a contradiction.

(iii). Let $uv$ be a component in $G[X_2']$ and $u'$ (resp. $v'$)  be the second neighbor of $u$ (resp. $v$). Then $u',v'\in Y$. To complete (iii), we show  that $u'=v'$. If not, then $u'v\notin E(G)$. Hence $G$ contains a $(u',v)$-path of length at least $r-1\ge 5$ in $G$. With a similar discussion as in (ii), we have $u\notin V(P)$ and so $P+u'uv$ is a cycle of length at least $r+1$ in $G$, a contradiction.

(iv). If not, then $G[Z\cup X_{\ge 4}\cup N_{G}(X_{\ge 4})]$ forms a component of $G$, which is a contradiction to Fact~\ref{FACT: sm}.

(v). If not, then there is a $w\in X_2\cup X_3$ with $N_G(w)\cap Y=\{u,v\}$ but $uv\notin E(G)$. So there is a $(u,v)$-path $P$ of length at least $r-1$ in $G$. With the same reason as in (ii), $w\notin V(P)$. Therefore, $P+uwv$ is a cycle of length at least $r+1$ in $G$, a contradiction.

(vi).
Recall that $H=G[Y\cup Z\cup X_{\ge 4}]$.
We first prove $\delta_H(Y)\ge 2$. Suppose there exists a vertex $v\in Y$ with $d_{H}(v)\le 1$.

If $d_H(v)=0$, then by (v), $E_G[v, X_2\cup X_3]=\emptyset$. So $N_G(v)\subseteq X_2'$. By (iii), the component containing $v$ is isomorphic to $T(t,0,0,0,0)$ for some $t>0$.
By Fact~\ref{FACT: sm}, $G$ is connected, which implies that $G$ is isomorphic to $T(t,0,0,0,0)$. Clearly, $T(t,0,0,0,0)$ is not $\mathcal{C}_{\ge r}$-saturated for $r\ge 6$, a contradiction.

Now suppose $d_H(v)=1$ and let $N_H(v)=\{u\}$. By (v), $N_G(v)\cap(X_2\cup X_3)\subseteq N_G(u)\cap(X_2\cup X_3)$. By (iii), $N_G(v)\cap X_2'$ is disjoint with $N_G(u)\cap X_2'$. We first claim that $N_G(v)\cap X_2'=\emptyset$. If not, choose $w\in N_G(v)\cap X_2'$. Then $wu\notin E(G)$ because $u$ and $v$ have no common neighbor in $X_2'$. So there is a $(u,w)$-path of length at least $r-1\ge 5$  in $G$. Since the  edge containing $w$ in $G[X_2']$ only connect to $v$ in $G$, any $(u,w)$-path must pass through $v$. But the longest $(u,v)$-path in $G$ has length at most two (equality holds when $N_G(v)\cap(X_2\cup X_3)\not=\emptyset$) and the longest $(v,w)$-path has length two, so the longest $(u,w)$-path has length at most four, a contradiction.
With similar discussion, we have $N_G(u)\cap X_2'=\emptyset$.
Therefore, the block $B$ containing $v$ is isomorphic to $H(k, 4, 2)$ centered at $\{u,v\}$, where $k=|N_G(v)\cap(X_2\cup X_3)|+2$. If $|N_G(v)\cap(X_2\cup X_3)|\ge 2$ then $B$ is not $\mathcal{C}_{\ge r}$-saturated because adding any edge in $X_2\cup X_3$ gives rise to a longest cycle of length at most $5\le r-1$ in $B$, a contradiction to the $\mathcal{C}_{\ge r}$-saturation of $B$. So $|N_G(v)\cap(X_2\cup X_3)|\le 1$ and thus $d_G(v)\le 2$, which is a contradiction to $d_G(v)\ge 3$. Therefore, we have $\delta_H(Y)\ge 2$.
Now we show that $\overline{d}_{H}(Y_2)\ge\frac{5}{2}$ using a discharging argument. Recall that every vertex of $Y_2$ has at least one neighbor in $Y_2$.
\begin{claim}\label{CLAIM: c1}
For any $v\in Y_2$ with $d_{H}(v)=2$, the two neighbors of $v$ are adjacent.
\end{claim}
If not, denote $N_H(v)=\{v_1, v_2\}$, then there is a $(v_1,v_2)$-path $P$ of length $r-1\ge 5$ in $G$.
If $v\in V(P)$, by (i), $G[X'_2\cup X_2\cup X_3, X_{\ge 4}\cup Z]$ is empty, then the only vertices used by $P$ are $v_1,v_2,v$ and at most two vertices in $X_2\cup X_3$, i.e. $P$ has length at most $4<r-1$, a contradiction.
Hence, $v\notin V(P)$. It follows that $P+v_1vv_2$ is a cycle of length at least $r+1$ in $G$, a contradiction too.

\begin{claim}\label{CLAIM: c2}
For any pair of vertices $u,v\in Y_2$ with $d_{H}(u)=d_{H}(v)=2$, $uv\notin E(G)$.
\end{claim}
Otherwise, let $w$ be the other neighbor of $v$ in $H$, then $wu\in E(G)$ by Claim~\ref{CLAIM: c1}. Hence the triangle $T=uvwu$ forms a block of $H$. Let $B$ be the block of $G$ containing $T$. Then $B$ is obtained from $T$ by adding $T$-paths of length exactly two, each of which has ends in $\{u,v,w\}$ and the internal vertex in $X_2\cup X_3$.
If $B\cap (X_2\cup X_3)\not=\emptyset$, we claim that $B$ is not $\mathcal{C}_{\ge r}$-saturated, so we have a contradiction to Lemma~\ref{LEM:cut}. In fact, let $P=uxv$ be a $T$-path with $x\in X_2\cup X_3$.
Then $wx\notin E(G)$. But the longest $(w,x)$-path is at most four by the structure of $B$. So $B$ is not $\mathcal{C}_{\ge r}$-saturated.
Now assume $B\cap (X_2\cup X_3)=\emptyset$. That is $B=T=uvwu$. Since  $d_G(u)\ge 3$, by (ii), there must be an edge $u_1u_2\in G[X_2']$ such that the triangle $T'=uu_1u_2u$ forms a block of $G$. Clearly, the longest path connecting any pair of nonadjacent vertices in $V(T)\cup V(T')$ has length at most $4<r-1$, a contradiction.

A vertex $v\in Y_2$ with $d_H(v)=r$ (or $d_H(v)\ge r$) is called an $r$-vertex (or an $r^+$-vertex).
From Claims~\ref{CLAIM: c1} and~\ref{CLAIM: c2}, we have that for each 2-vertex $v\in Y_2$,   either $v$ has two adjacent $3^+$-neighbors in $Y_2$ (we call $v$ an inner vertex), or $v$ has two adjacent neighbors such that one is a $3^+$-vertex in $Y_2$ and the other in $Z\cup X_{\ge 4}$ (we call $v$ a boundary vertex).

\begin{claim}\label{CLAIM: c3}
Every $3^+$-vertex $v\in Y_2$ has at most $d_H(v)-1$ neighbors of degree two in $Y_2$.
\end{claim}
Suppose $v\in Y_2$ is a $3^+$-vertex adjacent to $r$ vertices of degree two in $Y_2$. Let $v_1, \ldots, v_r$ be the 2-vertices in $Y_2$ adjacent to $v$ and $u_1,\ldots, u_r$ be their other neighbors so that $u_i$ is adjacent to $v_i$ for $i=1,\ldots, r$. By Claims~\ref{CLAIM: c1} and~\ref{CLAIM: c2}, $u_1, \ldots, u_r\in N_H(v)$ and $\{v_1, \ldots, v_r\}$ is an independent set in $H$. Hence $d_H(v)\ge r+1$, the equality holds if and only if $u_1=\cdots=u_r$.

\begin{claim}\label{CLAIM: c4}
No $3$-vertex in $Y_2$ is adjacent to two boundary vertices in $Y_2$.
\end{claim}
If not, suppose that there is a $3$-vertex $v\in Y_2$ adjacent to two boundary vertices $v_1, v_2\in Y_2$. By Claim~\ref{CLAIM: c1}, $v, v_1, v_2$ have a common neighbor $u\in Z\cup X_{\ge 4}$. Hence $u$ is a cut vertex separating $v, v_1, v_2$ and the other vertices of $Z\cup X_{\ge 4}$ (if $Z\cup X_{\ge 4}\not=\emptyset$). By definition, $v_1, v_2$ are 2-vertices. By Claim~\ref{CLAIM: c2}, $v_1v_2\notin E(G)$. Hence there is a $(v_1,v_2)$-path $P$ of length at least $r-1$  in $G$. Let $B$ be the block of $G$ containing $\{v, v_1, v_2, u\}$. By (i) and (v), $v\in V(P)$ and the length of $P$ is at most $4<r-1$, a contradiction.

For each $v\in Y_2$, define its initial charge as $ch(v)=d_H(v)-\frac 52$. Then
\begin{equation*}\label{EQN: e1}
\sum_{v\in Y_2}ch(v)=\sum_{v\in Y_2}d_H(v)-\frac 52|Y_2|.
\end{equation*}
Hence to show $\overline{d}_H(Y_2)\ge \frac 52$, it is sufficient to show $\sum_{v\in Y_2}ch(v)\ge 0$.
Now we redistribute the charges according to the following rules.

(R1) Every $3^+$-vertex $v\in Y_2$ gives $\frac 14$ to each of its incident inner vertex in $Y_2$.

(R2) Every $3^+$-vertex $v\in Y_2$ gives $\frac 12$ to each of its incident boundary vertex in $Y_2$.

We proceed to derive  that each vertex $v\in Y_2$ ends up with a nonnegative final charge $ch'(v)$.

For a 2-vertex $v\in Y_2$, if $v$ is an inner vertex, by Claim~\ref{CLAIM: c2}, $v$ has two $3^+$-neighbors in $Y_2$. Hence by (R1), $v$ receives at least $2\times \frac 14=\frac 12$ from its $3^+$-neighbors. If $v$ is a boundary vertex, by (R2), $v$ receives at least $\frac 12$ from its $3^+$-neighbor. So the final charge $ch'(v)=2-\frac 52+\frac 12=0$.

For a 3-vertex $v\in Y_2$,  by Claim~\ref{CLAIM: c4}, if $v$ is adjacent to a boundary vertex then $v$ has no other neighbor of degree two, so $v$ gives $\frac 12$ to its boundary neighbor. If $v$ is not adjacent to boundary vertex then, by Claim~\ref{CLAIM: c3}, $v$ has at most two neighbors of degree two, so $v$ gives at most $2\times \frac 14$ to its neighbors. Therefore, the final charge $ch'(v)=3-\frac 52-\frac 12=0$.

For a $4^+$-vertex $v\in Y_2$, by Claim~\ref{CLAIM: c3}, $v$ has at most $d_H(v)-1$ neighbors of degree two. By (R1) and (R2), $v$ gives at most $\frac 12(d_H(v)-1)$ to its neighbors of degree two. So the final charge $ch'(v)=d_H(v)-\frac 52-\frac 12(d_H(v)-1)=\frac 12d_H(v)-2\ge 0$.

Therefore,
\begin{equation*}\label{EQN: e2}
\sum_{v\in Y_2}ch(v)=\sum_{v\in Y_2}ch'(v)\ge 0.
\end{equation*}
This completes the proof of (vi).
\end{proof}

\begin{cor}\label{COR:ineq}
Let $G$ be a $\mathcal{C}_{\ge r}$-saturated graph on $n$ vertices for  $n\ge r\ge 6$. $X_1$, $X_2$, $X_2'$, $X_3$,  $X_{\ge4}$, $Y_1$, $Y_2$, $Z$ are defined the same as in Lemma~\ref{THM: structure} and let $x_1=|X_1|, x_2=|X_2|, x_2'=|X_2'|, x_3=|X_3|, x_4=|X_{\ge 4}|, y=|Y|, z=|Z|$ and $y_1=|Y_1|$. We have

(a) $x_1=x_3+x_4$ and $n=x_2+x_2'+2x_3+2x_4+y+z$;

(b) $y_1\le \frac{1}{2}x_2'$ and $y\le 2x_2+2x_3+\frac{1}{2}x_2'$;

(c) if $x_2+x_3=0$ and $G[Y\cup Z\cup X_{\ge 4}]$ is a complete graph, then $z+x_4+y=r-1$; otherwise, $x_4+x_3+x_2'\le n-r$ and $3x_2+2x_3+z-\frac{1}{2}x_2'\ge 2r-n$.
\end{cor}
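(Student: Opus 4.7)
The plan is to deduce each part directly from Lemma~\ref{THM: structure} and the $\mathcal{C}_{\ge r}$-saturation property. For part (a), $x_1 = x_3 + x_4$ is immediate from Lemma~\ref{THM: structure}(ii), which states that $G[X_1, X_3 \cup X_{\ge 4}]$ is a perfect matching, so $|X_1| = |X_3 \cup X_{\ge 4}|$. Substituting into the partition identity $n = x_1 + x_2 + x_2' + x_3 + x_4 + y + z$ yields the formula for $n$. For part (b), both inequalities follow by counting $Y$-vertices via their incidences with $X_2 \cup X_3 \cup X_2'$. If $v \in Y_1$, then $v$ has no $Y$-neighbor; by Lemma~\ref{THM: structure}(v) this rules out any neighbor in $X_2 \cup X_3$, so $v$ must be adjacent to some $X_2'$-vertex. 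By Lemma~\ref{THM: structure}(iii) (together with the fact that each $X_2'$-vertex has a unique non-$X_2'$ neighbor), each edge of $G[X_2']$ has a unique common $Y$-neighbor, yielding an injection from $Y_1$ into $E(G[X_2'])$, a perfect matching of size $\tfrac{1}{2}x_2'$. For the bound $y \le 2x_2 + 2x_3 + \tfrac{1}{2}x_2'$: each vertex of $X_2 \cup X_3$ contributes exactly two $Y$-neighbors and each edge of $G[X_2']$ contributes exactly one common $Y$-vertex, so summing the incidences to distinct $Y$-vertices bounds $|Y|$.

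For part (c), the heart of the argument lies in the \emph{otherwise}-case. I would first establish that there exist nonadjacent $u, v \in V(G) \setminus (X_1 \cup X_2')$. If $H := G[Y \cup Z \cup X_{\ge 4}]$ is not complete, take any nonadjacent $u, v \in V(H)$. If $H$ is complete but $x_2 + x_3 > 0$, pick $w \in X_2 \cup X_3$; Lemma~\ref{THM: structure}(v)(vi) guarantees $|V(H)| \ge 3$, and since $w$ has only two neighbors inside $X_2 \cup X_3 \cup X_{\ge 4} \cup Y \cup Z$ (both in $Y$), it admits a non-neighbor there. Given such $u, v$, the saturation property yields a $(u,v)$-path $P$ of length $\ge r-1$, hence $|V(P)| \ge r$. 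A leaf has degree $1$ and can only be a path endpoint; moreover, an $X_2'$-vertex (degree $2$, with $X_2'$-mate and common $Y$-neighbor) can be internal to a path only if its mate is an endpoint---otherwise a short case check shows one must revisit a vertex. Since $u, v \notin X_1 \cup X_2'$, all vertices of $P$ lie in $X_2 \cup X_3 \cup X_{\ge 4} \cup Y \cup Z$, giving $x_2 + x_3 + x_4 + y + z \ge r$. Using $x_1 = x_3 + x_4$, this rearranges to $x_4 + x_3 + x_2' \le n - r$. The second inequality follows via the algebraic identity
\[
n + 3x_2 + 2x_3 + z - \tfrac{1}{2}x_2' \;=\; 2\bigl(x_2 + x_3 + x_4 + y + z\bigr) + \Bigl(2x_2 + 2x_3 + \tfrac{1}{2}x_2' - y\Bigr),
\]
whose first summand is $\ge 2r$ by what was just proved and whose second summand is $\ge 0$ by part (b).

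In the \emph{if}-case ($x_2 + x_3 = 0$ and $H$ complete), the clique $H$ is $\mathcal{C}_{\ge r}$-free, hence $|V(H)| = y + z + x_4 \le r - 1$. For the matching bound $y + z + x_4 \ge r - 1$, one uses saturation on a carefully chosen nonadjacent pair: for instance, a leaf $\ell \in X_1$ together with some $w \in V(H)$ different from $\ell$'s support $v$. The required $(\ell, w)$-path must begin $\ell \to v$, and its remaining portion lives inside $V(H)$ (leaves and $X_2'$-vertices being excluded from the interior), so its length is at most $|V(H)|$, forcing $|V(H)| \ge r - 1$. The main obstacle will be the degenerate sub-cases of this \emph{if}-case where $X_1 = \emptyset$ or $X_2' = \emptyset$: there one must choose the nonadjacent pair from $X_2'$ and $V(H)$ and analyze the maximal detour through an $X_2'$-triangle more carefully to force the equality.
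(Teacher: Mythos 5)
Parts (a), (b) and the \emph{otherwise}-branch of (c) are correct and essentially reproduce the paper's argument: your injection from $Y_1$ into the edges of $G[X_2']$ and the incidence count for $y$ are what the paper does, and your unified choice of a nonadjacent pair in $V(G)\setminus(X_1\cup X_2')$ (together with the observation that leaves and $X_2'$-vertices cannot occur on a path whose ends avoid $X_1\cup X_2'$) replaces the paper's separate treatment of ``$x_2+x_3=0$, $H$ incomplete'' and ``$x_2+x_3\neq0$'' (including its small-order degenerate cases for $G[Y\cup Z\cup X_{\ge4}\cup X_2\cup X_3]$); your algebraic identity for the second inequality is a correct restatement of the paper's computation. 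These are mild streamlinings, not a different route.

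The genuine problem is the \emph{if}-branch, precisely where you flag an ``obstacle.'' Your leaf argument does force $y+z+x_4\ge r-1$ when $X_1\neq\emptyset$, and the sub-case $X_1=X_2'=\emptyset$ is vacuous (then $G=K_n$ with $n\ge r$, which is not $\mathcal{C}_{\ge r}$-free). But in the remaining sub-case $X_1=\emptyset$, $X_2'\neq\emptyset$, the ``more careful analysis of the detour through an $X_2'$-triangle'' cannot force the equality: for a nonadjacent pair $a\in X_2'$ (with mate $b$ and common neighbour $y_0\in Y$) and $w\in V(H)$, the longest $(a,w)$-path is $a,b,y_0,\dots,w$, of length at most $|V(H)|+1$, so saturation yields only $y+z+x_4\ge r-2$. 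This is sharp: $K_{r-2}$ with one pendant triangle attached at $y_0$ is $\mathcal{C}_{\ge r}$-saturated (its longest cycle has length $r-2<r$, and adding any non-edge $aw$ or $bw$ closes the path $a,b,y_0,\dots,w$ through a Hamiltonian path of the clique into a cycle of length exactly $r$), yet it has $x_2+x_3=0$, $G[Y\cup Z\cup X_{\ge4}]=K_{r-2}$ complete, and $y+z+x_4=r-2$. So the stated equality $z+x_4+y=r-1$ is actually false in this sub-case; the correct conclusion is $y+z+x_4\in\{r-2,r-1\}$, with $r-1$ guaranteed only when some vertex of $H$ carries a pendant edge. (The paper's own one-line justification --- that such a $G$ ``is $C_{y+z+x_4+1}$-saturated, which implies $y+z+x_4=r-1$'' --- has the same defect: a graph can be $\mathcal{C}_{\ge k}$-saturated for two consecutive values of $k$.) The weaker bound $y+z+x_4\ge r-2$ still suffices for the use of this branch in Corollary~\ref{COR: (2)} (one checks $\binom{r-2}{2}+x_4+\frac32x_2'\ge n+\frac r2$ there as well), but neither you nor the paper can legitimately claim the equality as stated.
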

\begin{proof}
(a) follows directly from (ii) of Lemma~\ref{THM: structure}.

(b) By (v) of Lemma~\ref{THM: structure}, $N_G(X_2\cup X_3)\cap Y\subseteq Y_2$. Hence $N_G(Y_1)\subseteq X_2'$. By (ii), (iii) of Lemma~\ref{THM: structure} and the double-counting method, $2y_1=2|Y_1|\le |E_G(Y_1, X_2')|\le |X_2'|=x_2'$. Similarly, we have  $y-y_1=|Y_2|\le |E_G(X_2\cup X_3, Y_2)|=2|X_2\cup X_3|=2x_2+2x_3$. So $y\le 2x_2+2x_3+\frac 12 x_2'$.


(c) If $x_2+x_3=0$ and $G[Y\cup Z\cup X_{\ge 4}]$ is a complete graph, then $G$ is obtained from the complete graph $K_{y+z+x_4}$ by attaching leaves to $X_{\ge 4}$   and $K_3$'s to $Y$. It is easy to check that this graph $G$ is $C_{y+z+x_4+1}$-saturated, which implies $y+z+x_4=r-1$.

 If $x_2+x_3=0$ but $G[Y\cup Z\cup X_{\ge 4}]$ is not a complete graph, then any pair of nonadjacent vertices in $Y\cup Z\cup X_{\ge 4}$ are connected by a path of length at least $r-1$ in $G$. Obviously, all of the vertices in this path are in $Y\cup Z\cup X_{\ge 4}$, which implies $y+z+x_4\ge r$. Note that $n=z+y+x_2'+2x_4$. So $x_3+x_2'+x_4=x_2'+x_4\le n-r$.

Now suppose $x_2+x_3\neq 0$. Denote $H=G[Y\cup Z\cup X_{\ge 4}\cup X_2\cup X_3]$. Since every vertex in $X_2\cup X_3$ has degree exactly two in $H$, $H$ is not a complete graph if $y+z+x_4+x_2+x_3\ge 4$.  If $y+z+x_4+x_2+x_3\le 3$, since each vertex in $X_2\cup X_3$ has two neighbors in $Y$, $y\ge 2$ and thus $y=2$, $x_2+x_3=1$, and $z+x_4=0$. Therefore, $G$ is isomorphic to $T(t,0,0,0,0)$ for some $t\ge 1$ (for $x_2=1$) or is the graph obtained from $T(t,0,0,0,0)$  by attaching one leaf to the vertex in $X_3$ (for $x_3=1$). By Lemma~\ref{LEM: 4str}, $G$ is $C_{\ge 4}$-saturated but not $C_{\ge 6}$-saturated,  a contradiction to $r\ge 6$. Hence $G[Y\cup Z\cup X_{\ge 4}\cup X_2\cup X_3]$ is not a complete graph.
So any pair of nonadjacent vertices is connected by a path $P$ of length at least $r-1$ in $G$. By (i) and (iii) of Theorem~\ref{THM: structure}, $V(P)\subseteq Y\cup Z\cup X_{\ge 4}\cup X_2\cup X_3$. Therefore, $y+z+x_4+x_2+x_3\ge r$. Note that $n=(y+z+x_4+x_2+x_3)+x_4+x_3+x_2'$. So $x_4+x_3+x_2'\le n-r$.
By (b), we have $3x_2+2x_3+z-\frac{1}{2}x_2'\ge y+z+x_2-x_2'=n-2(x_4+x_3+x_2')\ge 2r-n$.
\end{proof}
\begin{cor}\label{COR: (2)}
Let $G$ be a $\mathcal{C}_{\ge r}$-saturated graph on $n$ vertices for some $r\ge 6$ and $\frac n2\le r\le n$. Then $e(G)\ge n+\frac{r}{2}$. 
\end{cor}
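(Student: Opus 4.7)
The plan is to compute $e(G)$ exactly in terms of the partition $\{X_1,X_2,X_2',X_3,X_{\ge 4},Y,Z\}$ supplied by Lemma~\ref{THM: structure}, to lower-bound the edges inside $H:=G[Y\cup Z\cup X_{\ge 4}]$ using the degree information in part (vi), and then to close the argument with the two cases of Corollary~\ref{COR:ineq}(c).

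First, parts (i) and (ii) of Lemma~\ref{THM: structure} pin down all edges of $G$ not lying in $H$ exactly: the matching $G[X_1,X_3\cup X_{\ge 4}]$ contributes $x_3+x_4$ edges, the matching $G[X_2']$ contributes $x_2'/2$, and the degree constraints force $x_2'$ edges from $X_2'$ to $Y$, $2x_2$ edges from $X_2$ to $Y$, and $2x_3$ edges from $X_3$ to $Y$. Summing gives
\[
e(G)=x_4+3x_3+\tfrac{3}{2}x_2'+2x_2+e(H).
\]
Every vertex of $Z\cup X_{\ge 4}$ has $H$-degree at least $3$ (each vertex of $Z$ has all its $G$-neighbors in $H$, while each vertex of $X_{\ge 4}$ loses only its single leaf-neighbor), and from (vi) one has $\delta_H(Y)\ge 2$ together with $\overline{d}_H(Y_2)\ge\tfrac{5}{2}$. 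Summing degrees in $H$ and invoking $y_1\le x_2'/2$ from Corollary~\ref{COR:ineq}(b) yields $e(H)\ge\tfrac{5}{4}y-\tfrac{1}{8}x_2'+\tfrac{3}{2}(z+x_4)$, so, after subtracting $n=x_2+x_2'+2x_3+2x_4+y+z$,
\[
e(G)-n\ge\tfrac{1}{2}x_4+x_3+\tfrac{3}{8}x_2'+x_2+\tfrac{1}{4}y+\tfrac{1}{2}z.
\]

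To conclude, I would split on Corollary~\ref{COR:ineq}(c). In the generic case $x_4+x_3+x_2'\le n-r$, doubling the target $e(G)\ge n+r/2$ and substituting this upper bound for $r$ reduces it to $y\le 2x_2+2x_3+\tfrac{3}{2}x_2'$, which is strictly weaker than the bound $y\le 2x_2+2x_3+\tfrac{1}{2}x_2'$ in Corollary~\ref{COR:ineq}(b); this case is therefore immediate. In the special case $x_2=x_3=0$ with $H$ complete on $y+z+x_4=r-1$ vertices, the generic degree lower bound on $e(H)$ is too loose, so I would substitute the exact value $e(H)=\binom{r-1}{2}$; combined with $n=x_2'+x_4+r-1$ this gives
\[
e(G)-\bigl(n+\tfrac{r}{2}\bigr)=\tfrac{x_2'}{2}+\tfrac{r^2-6r+4}{2}\ge 0 \quad\text{for } r\ge 6.
\]
The main technical obstacle will be calibrating the $5/2$ average-degree bound on $Y_2$ so that the generic reduction lines up precisely with the slack from Corollary~\ref{COR:ineq}(b); once that weighting is correct, both cases fall out with a little room to spare.
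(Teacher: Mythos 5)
Your proposal is correct, and while it shares the paper's skeleton (the exact edge decomposition of Equation~(\ref{EQN: e(G)}), the two-case split from Corollary~\ref{COR:ineq}(c), and an identical treatment of the case where $G[Y\cup Z\cup X_{\ge4}]$ is complete), it resolves the generic case by a genuinely different and cleaner route. The paper forms the weighted combination $\left(\frac{r}{2n}-\frac14\right)A+\left(\frac{r}{n}-\frac12\right)B+\left(\frac12-\frac{r}{2n}\right)C\le 0$, which is where the hypothesis $\frac n2\le r\le n$ enters (to make the weights nonnegative), and then appeals to the bound $e(G)\ge\frac54 n$ of Theorem~\ref{THM: subdivision} — in fact to the refined inequality $e(G)\ge\frac54 n+\frac18(2z+6x_2+x_2'+4x_3)$, which the citation alone does not literally supply. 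Your degree-sum computation $e(H)\ge\frac54 y-\frac18 x_2'+\frac32(z+x_4)$, leading to $e(G)-n\ge\frac12x_4+x_3+\frac38x_2'+x_2+\frac14y+\frac12z$, is precisely the missing justification for that refined bound, so your argument is self-contained where the paper's is slightly hand-wavy. Moreover, you close the generic case using only $B\le0$ and Corollary~\ref{COR:ineq}(b) (reducing the target to $y\le2x_2+2x_3+\frac32x_2'$, with a slack of $x_2'$ to spare), never touching $C\le0$ or the assumption $r\ge\frac n2$; since the complete-$H$ case also does not use that assumption, your proof actually establishes $e(G)\ge n+\frac r2$ for all $n\ge r\ge6$, a (mild) strengthening of the stated corollary. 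All the intermediate identities and inequalities you assert check out against Lemma~\ref{THM: structure} and Corollary~\ref{COR:ineq}.
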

\begin{proof}
Let $X_1, X_2, X_2', X_3, X_{\ge4}, Y_1, Y_2, Z$ are defined the same as in Lemma~\ref{THM: structure} and let $x_1, x_2, x_2', x_3, x_4, y, z$ and $y_1$ defined as in Corollary~\ref{COR:ineq}.
Denote $H=G[Y\cup Z\cup X_{\ge 4}]$. Then
\begin{eqnarray}
e(G)&=& e(H)+e_G(X_3\cup X_{\ge 4}, X_1)+e_G(Y, X_2\cup X_3)+e(G[X_2'])+e_G(Y, X_2')\nonumber\\
&=& e(H)+(x_3+x_4)+2(x_2+x_3)+\frac{3}{2}x_2'.\label{EQN: e(G)}
\end{eqnarray}

If $x_2+x_3=0$ and $G[Y\cup Z\cup X_{\ge 4}]$ is a complete graph, then $y+z+x_4=r-1\ge 5$ by (c) of Corollary~\ref{COR:ineq}.
By Equality~(\ref{EQN: e(G)}),
\begin{eqnarray*}
e(G) &=& |E(G[Y\cup Z\cup X_{\ge 4}])|+(x_3+x_4)+\frac{3}{2}x_2'\\
     &=&\binom{r-1}{2}+x_4+\frac{3}{2}x_2'\\
     &=&\binom{r-1}{2}+\frac{1}{2}x_2'+n-(r-1)\\
     &= & n+\frac{1}{2}(r^2-5r+4)+\frac{1}{2}x_2'\\
     &\ge& n+\frac{r}{2}+\frac{1}{2}x_2'\\
    &\ge&  n+\frac r2,
\end{eqnarray*}
where the third equality holds since $n=(z+x_4+y)+x_4+x_2'=r-1+x_4+x_2'$ and the fifth inequality holds since $r\ge 6$.


Now suppose $x_2+x_3\neq 0$ or $G[Y\cup Z\cup X_{\ge 4}]$ is not a complete graph. Then $x_4+x_3+x_2'\le n-r$ and $3x_2+2x_3+z-\frac{1}{2}x_2'\ge 2r-n$ by (c) of Corollary~\ref{COR:ineq}. Let $A=y-(2x_2+2x_3+\frac{1}{2}x_2')$, $B=(x_4+x_3+x_2')-(n-r)$ and $C=(2r-n)-(3x_2+2x_3+z-\frac{1}{2}x_2')$. Then $B,C\le 0$. Counting  $e_G(Y, X_2'\cup X_2\cup X_3)$, we have $A\le 0$.   Thus, since $\frac n2\le r\le n$, we get
$$\left(\frac{r}{2n}-\frac{1}{4}\right)A+\left(\frac{r}{n}-\frac{1}{2}\right)B+\left(\frac{1}{2}-\frac{r}{2n}\right)C\le 0\mbox{.}$$
So
\begin{equation*}
\begin{split}
e(G)  &\ge e(G)+\left(\frac{r}{2n}-\frac{1}{4}\right)A+\left(\frac{r}{n}-\frac{1}{2}\right)B+\left(\frac{1}{2}-\frac{r}{2n}\right)C\\
   &= e(G)+\left(\frac{r}{2n}-\frac{1}{4}\right)\left(z+y+x_2'+x_2+2x_3+2x_4\right)+\frac{x_2'}{8}-\frac{3x_2}{4}-\frac{x_3}{2}-\frac{z}{4}\\
   &\ge \frac{5}{4}n+\frac{1}{8}\left(2z+6x_2+x_2'+4x_3\right)+\left(\frac{r}{2n}-\frac{1}{4}\right)n+\frac{1}{8}\left(x_2'-6x_2-4x_3-2z\right)\\
   &\ge n+\frac{r}{2}+\frac{1}{4}x_2'\\
   &\ge n+\frac r2\mbox{,}
\end{split}
\end{equation*}
where the third inequality holds since $e(G)\ge \frac 54n$ by Theorem~\ref{THM: subdivision}.
\end{proof}

\section{Proof of Theorem~\ref{THM: upper}}
In this section, we construct maximally $\mathcal{C}_{\ge r}$-saturated graphs  that achieve the bounds stated in Theorem~\ref{THM: upper}.
Our constructions are based on the constructions of the maximally nonhamiltonian graphs with fewest edges given in~\cite{Clark83,Clark92,LJZY97,Stacho98}.
Bollob\'as~\cite{bollobas78} posed the problem of finding $\sat(n, C_n)$. Bondy~\cite{bondy72} has shown that $\sat(n, C_n)\ge \lceil\frac {3n}2\rceil$ for $n>7$.
In~\cite{Clark83,Clark92,LJZY97}, the authors completely determined that $\sat(n, C_n)=\lceil\frac {3n}2\rceil$ by constructing the maximally nonhamiltonian graphs with fewest edges. These constructions came from appropriate modifications of a family of well-known snarks, Isaacs' flower snarks.
Let $J_k$ be the Isaacs' flower snark on $4k$ vertices with $k=2p+1$ and $p\ge 7$, and for a vertex $v\in V(J_k)$, $J_k(v)$ denotes the graph obtained from $J_k$ by expanding $v$ to a triangle and for an edge $uv\in E(J_k)$, $J_k(uv)$ denotes the graph obtained from $J_k$ by replacing the edge $uv$ by a bowtie (i.e. a $T(2,0,0,0,0)$ in this paper), detailed definitions can be found in~\cite{Stacho98} (Definitions 1, 2 and 3) and {the appendix of this paper.  The following table lists the optimal $C_n$-saturated graphs for all $n$, where Clark et al~\cite{Clark83,Clark92} gave the construction for $n=8p, 8p+2, 8p+4$ and $8p+6$, and the optimality of the other cases have been proved by Stacho~\cite{Stacho98}.

\begin{center}
\begin{tabular}{c|c|c|c}\label{TB: t1}
order& constrction& order& constrction\\
\hline
$8p$ & $J_{k-2}(v_2,v_{14})$ & $8p+1$ & $J_{k-2}(v_{14})(v_0v_2)$\\
$8p+2$ & $J_{k-2}(v_2,v_{14},v_{26})$ & $8p+3$ & $J_{k-2}(v_{14},v_{26})(v_0v_2)$\\
$8p+4$ & $J_k$ & $8p+5$ & $J_{k-2}(v_{14},v_{26},v_{38})(v_0v_2)$\\
$8p+6$ & $J_k(v_2)$ & $8p+7$ & $J_k(v_0v_2)$
\end{tabular}

\end{center}


We define an {\it almost 3-regular} graph is a graph with all vertices of degree three but one, say $u_0$, of degree four with the property that  the neighborhood $N_G(u_0)$ induces a perfect matching in $G$, say $\{u_1u_2, v_1v_2\}$, such that $u_1, u_2$ (resp. $v_1, v_2$) have distinct neighbors out of $\{u_0\}\cup N_G(u_0)$. Note that $G[N_G(u_0)\cup\{u_0\}]\cong T(2,0,0,0,0)$ by the definition. A {\em barbell} is a graph obtained from two disjoint triangles by adding a new edge connecting them.
For simplify, we call  a 3-regular (or an almost 3-regular) graph containing no barbell as a subgraph {\it  a good graph}.
By the definitions of $J_k$, $J_k(v)$ and $J_k(uv)$, we can check that all optimal graphs constructed in the above table are good. So we have
\begin{lem}\label{LEM: 56}
 For any $r\ge 56$, there exists a ${C}_{r}$-saturated good graph $G$ on $r$ vertices and $\lceil\frac{3r}{2}\rceil$ edges.
\end{lem}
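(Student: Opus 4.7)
The plan is a straightforward case analysis on $r \bmod 8$, taking for each residue class the construction named in the table and checking three properties: (a) that it is $C_r$-saturated, (b) that it has exactly $\lceil 3r/2\rceil$ edges, and (c) that it is good, i.e.\ 3-regular or almost 3-regular and barbell-free.

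Parts (a) and (b) require nothing new. The $C_r$-saturation of each listed graph is established in the cited works of Clark, Entringer and Shapiro~\cite{Clark83,Clark92}, Lin et al.~\cite{LJZY97}, and Stacho~\cite{Stacho98}. The edge count follows by elementary bookkeeping from the definitions of $J_k$, $J_k(v)$ and $J_k(uv)$ recalled in the appendix: $|V(J_k)|=4k$ and $|E(J_k)|=6k$; each vertex-expansion adds $2$ vertices and $3$ edges; the bowtie replacement on one edge adds $3$ vertices and $5$ edges. Substituting $k=2p+1$ along each row of the table yields exactly $\lceil 3r/2\rceil$ edges, and in particular, since $r\ge 56$ forces $p\ge 7$, the snark parameters in the table are in the admissible range.

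For (c), the degree condition is immediate: $J_k$ is $3$-regular by definition; a vertex-expansion preserves $3$-regularity, because the three edges previously incident to the expanded vertex are redistributed one to each vertex of the new triangle; the bowtie replacement creates exactly one degree-$4$ vertex (the cut-vertex of the bowtie), whose four neighbors form a perfect matching lying in two triangles that share only that cut-vertex, which is precisely the definition of almost $3$-regular given just before the lemma. Hence the first four rows of the table give $3$-regular graphs and the other four give almost $3$-regular graphs.

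The main obstacle is barbell-freeness. A barbell consists of two vertex-disjoint triangles joined by a single edge. Since $J_k$ has girth $6$ for all $k\ge 3$, the underlying snark is triangle-free, so every triangle in any listed construction must arise from an expansion. My plan is to verify two facts for each row of the table. First, the expansion indices actually used in that row (a subset of $\{v_2, v_{14}, v_{26}, v_{38}\}$) lie at pairwise distance at least $3$ on the outer cycle of $J_k$ or $J_{k-2}$; this is where the explicit gaps of $12$ between indices come in, and this is exactly why $r\ge 56$ is imposed, since it guarantees $p\ge 7$ so that $v_{14}, v_{26}, v_{38}$ are well-defined and the outer cycle is long enough to separate them. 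Second, when the bowtie operation on the edge $v_0v_2$ is present, none of the other expansion indices is an endpoint or a neighbor of $v_0$ or $v_2$ in $J_k$; this is why $v_2$ itself is absent from the odd rows that use $v_0v_2$. Once both properties are verified, no two of the newly introduced triangles share an edge; and because $J_k$ has girth $6$, any single edge of $J_k$ joining two expanded triangles would close a cycle of length at most $5$ after contracting the triangles, a contradiction. Therefore no barbell exists, which completes the case-by-case verification and hence the lemma.
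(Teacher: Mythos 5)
Your proposal follows essentially the same route as the paper: take the tabulated snark-based constructions, delegate $C_r$-saturation to Clark--Entringer--Shapiro, Lin et al.\ and Stacho (the paper's appendix invokes Stacho's Proposition 2 together with the distance formula $d_{J_{k-2}}(v_{4a+2},v_{4b+2})=\min\{|b-a|,\,k-2-|b-a|\}$ for the odd residues), and verify goodness by inspecting the expansion sites --- indeed you supply more detail on the barbell check than the paper, which merely asserts it ``can be checked.'' The only blemishes are cosmetic: $J_k$ has girth $6$ only for $k\ge 7$ (harmless here since $p\ge 7$ forces $k-2\ge 13$), and a barbell requires its two triangles to be \emph{joined} by an edge rather than to share one, so the decisive observation is simply that two introduced triangles are joined by an edge exactly when the corresponding expansion sites (or an expansion site and an endpoint of the bowtie edge) are adjacent in the underlying snark --- which is precisely what your two stated verifications rule out.
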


We also need the following property of $C_r$-saturated graph on $r$ vertices.
\begin{lem}\label{LEM: mnh}
Let $G$ be a ${C}_{r}$-saturated good graph on $r\ge 6$ vertices. Then every edge $e\in E(G)$ is contained in a cycle of length $r-1$.
\end{lem}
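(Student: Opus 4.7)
The plan is to use the $C_r$-saturation of $G$: for any non-edge $xy$, the graph $G+xy$ has a Hamiltonian cycle $C$ (a cycle of length $r$, since $|V(G)|=r$). I will choose $xy$ so that $C$ is forced to contain the given edge $e=ab$, and so that deleting one specific vertex of $C$ leaves a Hamiltonian path of $G$ minus that vertex whose endpoints can be rejoined by a single $G$-edge at $a$ or $b$; this yields a cycle of length $r-1$ through $ab$ in $G$.

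The standard case is when some endpoint, say $a$, has degree $3$ in $G$ with two non-adjacent non-$b$ neighbours $a_1,a_2$. I would take $xy=a_1a_2$: the two edges used at $a$ in $C$ cannot both lie in $\{aa_1,aa_2\}$, since together with $a_1a_2$ they would form a triangle, impossible for $r\ge 6$. Hence $C$ uses $ab$ together with, say, $aa_1$, giving $C=a,b,\dots,a_2,a_1,a$. Deleting $a_1$ leaves a Hamiltonian path from $b$ to $a_2$ in $G-a_1$ whose edges all lie in $G$ (the only added edge $a_1a_2$ disappears with $a_1$), and closing it with the $G$-edge $a_2a$ yields the required $C_{r-1}$ through $ab$. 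A symmetric attack handles the case where $b$ has degree $3$ with non-adjacent non-$a$ neighbours.

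The remaining situations must have every attack of this form failing. When both $a,b$ have degree $3$ this would force triangles $T_a=aa_1a_2$ and $T_b=bb_1b_2$ joined by $ab$; if they are disjoint we obtain a barbell, forbidden in a good graph, and if they share a vertex, say $a_1=b_1$, then $a_1$ has four neighbours $a,a_2,b,b_2$, impossible when $G$ is $3$-regular and forcing $a_1=u_0$ in the almost-$3$-regular case. But then $G[N(u_0)]$ contains the three edges $aa_2$, $bb_2$, $ab$, contradicting the perfect-matching structure required at $u_0$. Thus this sub-case never arises.

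The hard case, and the main obstacle, is $a=u_0$ with $b$ of degree~$3$. Write $N(u_0)=\{b,a_1,a_2,a_3\}$ with induced matching $\{ba_1,a_2a_3\}$, and let $b_3$ be $b$'s third neighbour, which lies outside $\{u_0\}\cup N(u_0)$. The ``distinct outside neighbours'' clause of the almost-$3$-regular definition forces $b_3\ne a_1'$, where $a_1'$ is the analogous outside neighbour of $a_1$; hence $a_1b_3\notin E(G)$. Taking $xy=a_1b_3$, the two edges used at $b$ in $C$ cannot be $\{ba_1,bb_3\}$ (together with $a_1b_3$ this would be a triangle), so $C$ must use $ab$ together with exactly one of $ba_1,bb_3$, and the added edge $a_1b_3$ forces $C$ to traverse $b,a_1,b_3$ or $b,b_3,a_1$ consecutively. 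Deleting the middle vertex of this traversal leaves a Hamiltonian path of $G$ minus that vertex starting with $ab$, which closes via the remaining $G$-edge at $b$ (namely $bb_3$ or $ba_1$) into the desired cycle of length $r-1$ through $ab$. The key difficulty is that attacking directly from $u_0$ using a non-edge between two of its three non-$b$ neighbours need not force $ab$ into the Hamiltonian cycle at $u_0$; it is essential to attack from the degree-$3$ endpoint $b$ and exploit the distinctness clause of the almost-$3$-regular definition to locate the non-edge $a_1b_3$.
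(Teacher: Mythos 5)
Your overall strategy coincides with the paper's: the paper works with the Hamiltonian $(x,y)$-path of $G$ created by the chosen non-edge $xy$ (which is just your cycle $C$ minus the added edge), forces $e$ onto it because the attacked endpoint of $e$ has degree $3$, and reroutes one end to close a cycle of length $r-1$; in particular your ``hard case'' attack from the degree-$3$ endpoint $b$ of $e=u_0b$, via the non-edge between $b$'s matched neighbour in $N(u_0)$ and $b$'s outside neighbour guaranteed by the distinctness clause, is exactly the paper's Case~2. Those rerouting steps, and the uses of barbell-freeness and of the matching structure at $u_0$, are all correct.

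There is, however, one sub-case your triangle analysis does not cover. When both endpoints of $e=ab$ have degree $3$ and both attacks fail, you obtain triangles $aa_1a_2$ and $bb_1b_2$ and split into ``disjoint'' (barbell) and ``share a vertex'', arguing in the latter that the shared vertex $a_1=b_1$ has the four neighbours $a,a_2,b,b_2$. But if the triangles share \emph{both} of their non-$\{a,b\}$ vertices, i.e.\ $\{a_1,a_2\}=\{b_1,b_2\}$ so that $\{a,b,a_1,a_2\}$ induces a $K_4$, your list collapses to three distinct vertices and the ``degree four'' contradiction evaporates; as written, nothing rules this configuration out. The paper treats it separately: since $G$ is $C_r$-saturated it is connected, and with $r\ge 6$ some vertex of the $K_4$ must have a neighbour outside it; as $a$ and $b$ have degree $3$, that vertex is $a_1$ or $a_2$ and must be the unique $4$-vertex $u_0$ of an almost $3$-regular graph, whereupon $G[N_G(u_0)\cup\{u_0\}]$ contains a $K_4$ and hence is not isomorphic to $T(2,0,0,0,0)$, contradicting the definition. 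Adding this short argument closes the gap; the rest of your proof stands.
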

\begin{proof}
Suppose there exists an edge $e=u_0v_0\in E(G)$ which is not contained in any cycle of length $r-1$.

\noindent{\bf Case 1:} $d_G(u_0)=d_G(v_0)=3$.

Let $N_G(u_0)=\{v_0,a_1,a_2\}$ and $N_G(v_0)=\{u_0,b_1,b_2\}$.

Suppose $a_1a_2, b_1b_2\in E(G)$.
If $|\{a_1,a_2\}\cup\{b_1,b_2\}|=2$, then $\{a_1,a_2\}=\{b_1,b_2\}$ and $G[\{v_0,u_0,a_1,a_2\}]$ is isomorphic to $K_4$. Since $G$ is connected and $r\ge 6$, $G$ must be an almost 3-regular graph and the unique 4-vertex is in $\{a_1,a_2\}$. But this is impossible since $G[N_G(a_i)\cup\{a_i\}]\ncong T(2,0,0,0,0)$ for $i=1,2$.
If $|\{a_1,a_2\}\cup\{b_1,b_2\}|=3$, without loss of generality, let $a_1=b_1$ and $a_2\neq b_2$, then $a_1$  is a 4-vertex in $G$ and so $G$ must be an almost 3-regular graph.  Note that $N_G(a_1)=\{a_2,b_2,v_0,u_0\}$. So $G[N_G(a_1)\cup\{a_1\}]\ncong T(2,0,0,0,0)$,
a contradiction.
So $|\{a_1,a_2\}\cup\{b_1,b_2\}|=4$. But $G[\{u_0,a_1,a_2,v_0,b_1,b_2\}]$ induces a barbell in $G$, a contradiction.

Now suppose one of $a_1a_2,b_1b_2$ is not an edge in $G$. Without loss of generality, assume $a_1a_2\notin E(G)$. Then $G$ contains a Hamiltonian $(a_1, a_2)$-path $P$. So $u_0$ is an internal vertex of $P$.
We claim that $u_0v_0\in E(P)$. If not, then $u_0a_1,u_0a_2\in E(P)$ and so $P=a_1u_0a_2$, a contradiction. Thus $u_0v_0\in E(P)$. Since one of $u_0a_1,u_0a_2$ is contained in $P$, without loss of generality, assume $u_0a_1\in E(P)$. Hence $P-u_0a_1$ is a $(u_0, a_2)$-path on vertex set $V(G)\setminus\{a_1\}$.  Since $a_2u_0\notin E(P)$, $P-u_0a_1+u_0a_2$ is a cycle of length $r-1$ containing $u_0v_0$, a contradiction.

\noindent{\bf Case 2:} One of $u_0,v_0$ is a 4-vertex.

 Without loss of generality, assume $d_G(u_0)=4$ and $N_G(u_0)=\{u_1,u_2,v_0,v_1\}$ with $u_1u_2, v_0v_1\in E(G)$. Let $N_{G}(v_0)=\{u_0, v_1, a\}$. By the definition of the almost $3$-regular graph, $av_1\notin E(G)$. Hence $G$ contains a Hamiltonian path connecting $a$ and $v_1$.
 If $u_0v_0\notin E(P)$, then $P=av_0v_1$ is of length $2$, a contradiction. Thus $u_0v_0\in E(P)$. Since there is another one of $v_0v_1, v_0a$ contained in $P$, without loss of generality, assume $v_0v_1\in E(P)$. Then $v_0a\notin E(P)$. Hence $P-v_0v_1+v_0a$ is a cycle on $r-1$ vertices  containing $u_0v_0$ in $G$, a contradiction.
\end{proof}

Let $G$ and $H$ be two distinct graphs and $v\in V(G)$. We {\em attach $H$ to $v$} means that we identify a vertex of $H$ and $v$ to obtain a new graph.
Let $U, W$ be two disjoint subsets of $V(G)$. We define $L(G; U, W)$ be the graph obtained from $G$ by attaching a $K_2$ to each vertex of $U$ and attaching a $K_3$ to each vertex of $W$.
A vertex is called a {\it support vertex} of $G$ if it is adjacent a leaf of $G$.
For two graphs $G, H$, let $u$ and $v$ be two support vertices of $G$ and $H$, respectively. Define $C(G, H; uv)$ to be the graph obtained from $G$ and $H$ by adding a new edge $uv$ and deleting the leaves adjacent to $u, v$ in $G$ and $H$.
For $k\ge 3$ and a sequence of graphs $G_1, G_2, \ldots, G_k$. We recursively define $$C(G_1,...,G_k; u_1v_1,\ldots, u_{k-1}v_{k-1})=C(C(G_1,...,G_{k-1};u_1v_1,\ldots,u_{k-2}v_{k-2}),G_k; u_{k-1}v_{k-1}),$$
where $u_i$ (resp. $v_i$)is a support vertex of $G_i$ (resp. $G_{i+1}$).

Let $M_{r,r}$ be a ${C}_{r}$-saturated good graph. We define $M_{r,n}$ as follows:
\begin{itemize}
\item
If $r\le n\le 2r$, define $M_{r,n}=L(M_{r,r}; U, \emptyset)$, where $U\subset V(M_{r,r})$ and $|U|=n-r$;

\item
if  $2(k-1)r-2(k-2)<n<\frac{4k-3}{2}r$ for some $k\ge 2$, define
 $$G=C(G_1,...,G_{k-1};u_1v_1,\ldots, u_{k-2}v_{k-2}),$$
where $G_i=L(M^i_{r,r}; U_i, V_i)$ and $M^i_{r,r}$ are pairwise disjoint copies of $M_{r,r}$, $U_i(\not=\emptyset)$ and $V_i$ form a partition of $V(M^i_{r,r})$ with $\sum_{i=1}^{k-1}|V_i|=n-2(k-1)r+2(k-2)$, and $v_{i-1}, u_i\in U_i$ and $v_{i-1}\not=u_{i}$ for $1\le i\le k-1$.

\item
if  $\frac{4k-3}{2}r\le n\le 2kr-2(k-1)$ for some $k\ge 2$, define
 $$M_{r,n}=C(G_1,...,G_{k}; u_1v_1,\ldots, u_{k-1}v_{k-1}),$$
where $G_i=L(M^i_{r,r}; U_i, \emptyset)$ and $M^i_{r,r}$ are pairwise disjoint copies of $M_{r,r}$, $U_i\not=\emptyset$ are subsets of $V(M^i_{r,r})$ with $\sum_{i=1}^{k}|U_i|=n-kr+2(k-1)$,  and $v_{i-1}, u_i\in U_i$ and $v_{i-1}\not=u_i$ for $1\le i\le k$.

\end{itemize}



\begin{prop}\label{PROP: p1}
For $n\ge r\ge 56$, $M_{r,n}$ is $\mathcal{C}_{\ge r}$-saturated graph.
\end{prop}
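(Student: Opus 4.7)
The plan is to verify both defining properties of $\mathcal{C}_{\ge r}$-saturation for $M_{r,n}$: that $M_{r,n}$ contains no cycle of length at least $r$, and that adding any non-edge creates one. The heart of the argument is a uniform statement about $M_{r,r}$ which will be applied repeatedly. I will first establish the Key Claim: for any two vertices $x, y \in V(M_{r,r})$, there is an $(x,y)$-path in $M_{r,r}$ of length at least $r-2$. If $xy \notin E(M_{r,r})$, $C_r$-saturation produces a $C_{\ge r}$ in $M_{r,r}+xy$; this cycle must use $xy$, so deleting it yields an $(x,y)$-path of length at least $r-1$. If $xy \in E(M_{r,r})$, Lemma~\ref{LEM: mnh} provides a $C_{r-1}$ through $xy$, yielding an $(x,y)$-path of length $r-2$ that avoids $xy$.

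For the first property, I examine the block structure of $M_{r,n}$. Its blocks are the copies $B_1, \ldots, B_k$ of $M_{r,r}$, the triangles from attached $K_3$'s, and the $K_2$-blocks consisting of pendant leaf edges and bridges $u_iv_i$. Each $B_\ell$ is $C_r$-saturated on only $r$ vertices and hence contains no cycle of length $\ge r$; the remaining blocks are too small. Since every cycle lies within a single block, $M_{r,n}$ is $\mathcal{C}_{\ge r}$-free.

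For the second property, for each $v \in V(M_{r,n})$ define its \emph{anchor} $a(v) \in V(B_\ell)$ as the unique vertex of the big block through which $v$ is attached (taking $a(v)=v$ if $v$ already lies in some $B_\ell$), and let $b(v)$ denote the corresponding block index. Given a non-edge $xy$, I split into cases. If $b(x)=b(y)=\ell$ and both $x,y \in V(B_\ell)$, $C_r$-saturation of $B_\ell$ directly yields a $C_{\ge r}$ in $M_{r,n}+xy$. Otherwise, when $b(x)=b(y)=\ell$, we have $a(x) \neq a(y)$ since each anchor carries at most one pendant or one triangle and $U_\ell \cap V_\ell = \emptyset$; the Key Claim then provides an $(a(x),a(y))$-path in $B_\ell$ of length at least $r-2$ that extends via the short stubs at the ends to an $(x,y)$-path of length at least $r-1$. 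If $b(x)=i \neq j=b(y)$, I concatenate: stub $x \to a(x)$, a long $(a(x),u_i)$-path inside $B_i$, bridge $u_iv_i$, long $(v_i,u_{i+1})$-path inside $B_{i+1}$, and so on, ending with a long $(v_{j-1},a(y))$-path and a stub to $y$. Each interior long segment has length at least $r-2$ by the Key Claim since $v_{\ell-1} \neq u_\ell$ in the construction; together with the $j-i$ bridges the total easily reaches at least $r-1$.

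The chief obstacle is the boundary case where anchors or endpoints of $xy$ coincide with bridge endpoints $u_i$ or $v_i$, which would collapse a long-path contribution. The resolution is structural: after the $C$-operation the leaves originally at $u_i, v_i$ are deleted, and because $u_i, v_{i-1} \in U_i$ while $U_i \cap V_i = \emptyset$, neither $u_i$ nor $v_{i-1}$ carries an attached leaf or triangle vertex in $M_{r,n}$. Consequently $a(x)=u_i$ forces $x=u_i$ itself (and similarly for $v_i$), so the only total-collapse scenario is the bridge edge $u_iv_i$, which is not a non-edge. In the remaining boundary subcases (say $j=i+1$ with $a(x)=u_i$), one still has $a(y) \neq v_i$ and the $B_{i+1}$-segment alone contributes length $r-2$, so the bridge plus this segment yields length $\ge r-1$. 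Once this is pinned down the length-counting in each case is routine, and the assumption $r \ge 56$ is inherited from Lemma~\ref{LEM: 56}, which guarantees the existence of the good graph $M_{r,r}$ underlying the construction.
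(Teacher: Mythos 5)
Your proposal is correct and follows essentially the same route as the paper: block decomposition to establish $\mathcal{C}_{\ge r}$-freeness, then a case analysis on where the endpoints of the non-edge lie, building long paths from the Hamiltonian $(x,y)$-paths guaranteed by $C_r$-saturation of $M_{r,r}$ together with the $C_{r-1}$'s through edges from Lemma~\ref{LEM: mnh}, and concatenating across the cut edges $u_iv_i$ for endpoints in different blocks. Your ``Key Claim'' and anchor formalism merely repackage the paper's Case 1 subcases, and your boundary discussion of $a(x)=u_i$ corresponds to the paper's reduction ``if $b=v_j$ we may assume $a=u_i$.''
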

\begin{proof}
It is sufficient to show that  $C(G_1, \ldots, G_k; u_1v_1, \ldots, u_{k-1}v_{k-1})$ is $\mathcal{C}_{\ge r}$-saturated for $k\ge 1$, where $G_i=L(M^i_{r,r}; U_i, V_i)$ and $M^i_{r,r}$ are pairwise disjoint copies of $M_{r,r}$, $U_i(\not=\emptyset)$ and $V_i$ are disjoint subsets of $V(M^i_{r,r})$, and $v_{i-1}, u_i\in U_i$ with $v_{i-1}\not=u_i$ for $1\le i\le k-1$.

Let $H=C(G_1, \ldots, G_k; u_1v_1, \ldots, u_{k-1}v_{k-1})$.
By definition, the blocks of $H$ are isomorphic to $M_{r,r}$, $K_3$, or $K_2$. So $H$ is $\mathcal{C}_{\ge r}$-free since $M_{r,r}$ is ${C}_{r}$-saturated.
Now we prove that for any $a,b\in V(H)$ with $ab\notin E(H)$, $H$ contains an $(a,b)$-path on at least $r$ vertices.

\noindent{\bf Case 1:} $a,b\in V(G_i)$ for some $1\le i\le k$.

Without loss of generality, assume $i=1$.
If $a,b\in V(M^1_{r,r})$, we are done since $M_{r,r}$ is ${C}_{r}$-saturated. If $a\in V(M^1_{r,r})$ but $b$ is not, then $b$ has a neighbor, say $b'$, in $V(M^1_{r,r})$.
If $ab'\in E(M^1_{r,r})$ then $ab'$ is contained in a cycle $C$ on $r-1$ vertices within $M^1_{r,r}$ by Lemma~\ref{LEM: mnh}. Thus, $C-ab'+bb'$ is an $(a,b)$-path  on $r$ vertices in $L(M^1_{r,r}; U_1, V_1)$. If $ab'\notin E(M^1_{r,r})$ then $M^1_{r,r}$ contains a $(a, b')$-path $P$ on $r$ vertices. Thus $P+bb'$ is an $(a,b)$-path  on $r+1$ vertices in $L(M^1_{r,r};U_1, V_1)$. If $a,b\notin V(M^1_{r,r})$ then $a, b$ have two different neighbors in $V(M^1_{r,r})$ (this is because $ab\notin E(G_1)$).   If $a'b'\in E(M^1_{r,r})$ then $M^1_{r,r}$ contains a cycle $C$ on $r-1$ vertices containing $a'b'$. Thus $C-a'b'+a'a+b'b$ is an $(a,b)$-path on $r+1$ vertices in $L(M^1_{r,r}; U_1, V_1)$, we  are done. If $a'b'\notin E(M^1_{r,r})$ then $M^1_{r,r}$ contains an $(a',b')$-path $P$ on $r$ vertices. So $P+a'a+b'b$ is an $(a,b)$-path on $r+2$ vertices in $L(M^1_{r,r}; U_1, V_1)$.

\noindent{\bf Case 2:} $a\in V(G_i)$ and $b\in V(G_{j+1})$ for some $1\le i\le j\le k-1$.

Since $H$ is connected and $u_jv_j$ is a cut edge, there exists an $(a, u_j)$-path  $P_1$ from $a$ to $u_j$ containing no vertices in $V(M^{j+1}_{r,r})$. If $b\neq v_j$, then $u_jb\notin E(H)\cap E(G_{j+1})$. Hence $G_{j+1}$ contains a $(u_j, b)$-path $P_2$ on at least $r$ vertices by Case 1. Hence $P_1+P'_2$ is an $(a, b)$-path on at least $r$ vertices in $H$. If $b=v_j$, we may also assume $a=u_{i}$ by symmetry, which implies $i<j$.
Let $P$ be a $(u_i, v_{j-1})$-path $P_1$ containing no vertices in $V(M^{j}_{r,r})\setminus\{v_{j-1}\}$. Since $v_{j-1}\neq u_j$, $v_{j-1}v_j\notin E(G_{j})$. Again from Case 1, $G_j$ contains a $(v_{j-1}, v_j)$-path $P_2$ on at least $r$ vertices.  Hence $P_1+P_2$ is an $(a,b)$-path on at least $r$ vertices in $H$.
\end{proof}

\begin{prop}\label{PROP: p2}
For $n\ge r\ge 56$,
$M_{r,n}$ is $\mathcal{C}_{\ge r}$-saturated with $e(M_{r,n})=g(\frac{r}{n})n+O(\frac{n}{r})$. Furthermore, if $r\le n\le 2r$, we have $\sat(n, \mathcal{C}_{\ge r})=n+\lceil\frac{r}{2}\rceil$.
\end{prop}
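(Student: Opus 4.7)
The plan is to prove Proposition~\ref{PROP: p2} in two steps. Since Proposition~\ref{PROP: p1} already shows that $M_{r,n}$ is $\mathcal{C}_{\ge r}$-saturated, the first step is to verify the edge count $e(M_{r,n})=g(r/n)n+O(n/r)$ in each of the three defining regimes of the construction, and the second step is to derive $\sat(n,\mathcal{C}_{\ge r})=n+\lceil r/2\rceil$ in the range $r\le n\le 2r$.

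For the edge count, I would evaluate $e(M_{r,n})$ directly from the construction using three basic counts. First, each block $M^i_{r,r}$ contributes $\lceil 3r/2\rceil$ edges (Lemma~\ref{LEM: 56}). Second, attaching a $K_2$ to a vertex adds $1$ vertex and $1$ edge, while attaching a $K_3$ adds $2$ vertices and $3$ edges. Third, each application of the gluing operation $C(\cdot,\cdot;uv)$ removes two leaves along with their incident edges and inserts one new cut edge, producing a net change of $-2$ vertices and $-1$ edge. In the first regime $r\le n\le 2r$, a single $M_{r,r}$ with $n-r$ pendant leaves gives
\[
e(M_{r,n})=\bigl\lceil\tfrac{3r}{2}\bigr\rceil+(n-r)=n+\bigl\lceil\tfrac{r}{2}\bigr\rceil,
\]
which matches $g(r/n)n=n+r/2$ within an additive $1/2$. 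In the second regime $2(k-1)r-2(k-2)<n<(4k-3)r/2$, using $|U_i|+|V_i|=r$ together with $\sum_i|V_i|=n-2(k-1)r+2(k-2)$ yields
\[
e(M_{r,n})=2n-(k-1)\bigl\lfloor\tfrac{3r}{2}\bigr\rfloor+3(k-2);
\]
here $r/n$ lies in the second branch of $g$, so $g(r/n)n=2n-3(k-1)r/2$, and the discrepancy is $O(k)$. In the third regime $(4k-3)r/2\le n\le 2kr-2(k-1)$, the analogous computation with $\sum_i|U_i|=n-kr+2(k-1)$ yields
\[
e(M_{r,n})=n+k\bigl(\bigl\lceil\tfrac{3r}{2}\bigr\rceil-r\bigr)+(k-1),
\]
which matches $g(r/n)n=n+kr/2$ (the first branch) up to $O(k)$. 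In both of the last two regimes the number of blocks satisfies $k=\Theta(n/r)$, so the $O(k)$ error is $O(n/r)$; a direct check of the constants using the defining inequalities on $n$ bounds it by $2n/r$, matching the sharper statement in the remark following Theorem~\ref{THM: upper}.

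For the second statement, the upper bound $\sat(n,\mathcal{C}_{\ge r})\le n+\lceil r/2\rceil$ in the range $r\le n\le 2r$ is immediate from the first-regime edge count above. The matching lower bound is exactly Corollary~\ref{COR: (2)}, which gives $e(G)\ge n+r/2$ for every $\mathcal{C}_{\ge r}$-saturated $G$ on $n$ vertices with $n/2\le r\le n$ and $r\ge 6$; since $e(G)\in\mathbb{Z}$, this rounds up to $e(G)\ge n+\lceil r/2\rceil$.

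I do not see a genuine obstacle here: the substantive work, namely the structural lower bound via the discharging argument in Lemma~\ref{THM: structure} and the saturation of the construction in Proposition~\ref{PROP: p1}, is already in hand. The only care required is bookkeeping: correctly tracking how the sizes $\sum|U_i|$ and $\sum|V_i|$ are fixed by the target vertex count $n$, handling both parities of $r$ through the ceilings in $\lceil 3r/2\rceil$, and verifying that in each regime the residual $O(k)$ is absorbed by $O(n/r)$.
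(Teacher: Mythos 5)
Your proposal is correct and follows essentially the same route as the paper: saturation is delegated to Proposition~\ref{PROP: p1}, the edge counts in the three regimes are computed exactly as in the paper (yielding $n+\lceil r/2\rceil$, $2n-(k-1)\lfloor 3r/2\rfloor+3(k-2)$, and $n+k\lceil r/2\rceil+(k-1)$ respectively), and the exact value for $r\le n\le 2r$ follows by combining with the lower bound of Corollary~\ref{COR: (2)} and integrality. The only cosmetic slip is your labeling of which branch of $g$ applies in the last two regimes, but the formulas you substitute for $g(r/n)$ are the correct ones.
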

\begin{proof}
By  Theorem~\ref{LEM: 56} and Proposition~\ref{PROP: p1}, we know that $M_{r,n}$ is $\mathcal{C}_{\ge r}$-saturated. In the following, we check the order and the number of edges of $M_{r,n}$.
If $r\le n\le 2r$, $|V(M_{r,n})|=r+|U|=n$ and $e(M_{r,n})=\lceil\frac{3r}2\rceil+n-r=n+\lceil\frac{r}2\rceil$.
Since $\sat(n, \mathcal{C}_{\ge r})\ge n+\frac{r}{2}$, we have $\sat(n, \mathcal{C}_{\ge r})=n+\lceil\frac{r}{2}\rceil$.
If  $2(k-1)r-2(k-2)<n<\frac{4k-3}{2}r$ for some $k\ge 2$, by definition,
$$|V(M_{r,n})|=(k-1)r+\sum_{i=1}^{k-1}|U_i|+2\sum_{i=1}^{k-1}|V_i|-2(k-2)=n$$
and
\begin{eqnarray*}
e(M_{r,n})&=&(k-1)e(M_{r,r})+\sum_{i=1}^{k-1}|U_i|+3\sum_{i=1}^{k-1}|V_i|-(k-2)\\
    &=&(k-1)\left\lceil\frac{3}{2}r\right\rceil+(k-1)r+2\left(n-2(k-1)r+2(k-2)\right)-(k-2)\\
    &=&2n-(k-1)\left\lfloor\frac{3}{2}r\right\rfloor+3(k-2)\\
    &=& g\left(\frac{r}{n}\right)n+O\left(\frac{n}{r}\right)\left(<g\left(\frac{r}{n}\right)n+\frac{2n}r\right).
\end{eqnarray*}
If  $\frac{4k-3}{2}r\le n\le 2kr-2(k-1)$ for some $k\ge 2$, by definition,
$$|V(M_{r,n})|=kr+\sum_{i=1}^{k}|U_i|-2(k-1)=n$$
and
\begin{eqnarray*}
e(M_{r,n})&=&k e(M_{r,r})+\sum_{i=1}^{k}|U_i|-(k-1)\\
    &=&k\left\lceil\frac{3}{2}r\right\rceil+n-kr+2(k-1)-(k-1)\\
    &=&n+k\left\lceil\frac{r}{2}\right\rceil+(k-1)\\
    &=& g\left(\frac{r}{n}\right)n+O\left(\frac{n}{r}\right)\left(<g\left(\frac{r}{n}\right)n+\frac{2n}r\right).
\end{eqnarray*}
\end{proof}

\section{Remarks}
It is obvious that the Tur\'an function has monotonicity, i.e., $\ex(n,\mathcal{F}_1)\ge \ex(n,\mathcal{F}_2)$ for  $\mathcal{F}_1\subseteq\mathcal{F}_2$. But the saturation number does not have this  property (as has been observed in~\cite{Subdivision12,FF11,KT86,Pi04}  ).
In this paper,
we determine the exact values of $\sat(n,\mathcal{C}_{\ge r})$ for $r=6$ and $\frac n2\le r\le n$.
From the image of $g(x)$, we guess that $\sat(n,\mathcal{C}_{\ge r})$ does not have monotonicity with respect to $r$ too.
It is also an interesting question to determine the exact values of $\sat(n,\mathcal{C}_{\ge r})$ for the other values of $r$. It seems like for $r\ge 7$, $\sat(n,\mathcal{C}_{\ge r})$ are always close to either $\frac{5n}{4}$ or $\frac{3n}{2}$ when $n$ is large.

\section{Appendix}
In this section, we give the construction of Isaacs' snarks $J_k$ for odd $k\ge 3$ and two modifications for $3$-regular graphs used in the table of Section 4. We also show the optimality for $n=8p+1, 8p+3, 8p+5, 8p+7$ in that table.
\begin{myDef}
For odd $k\ge 3$, the Issacs' snark $J_k$ is a $3$-regular graph on vertex set $V=\{v_0,v_1,...,v_{4k-1}\}$ with edge set
$$E=\bigcup_{j=0}^{k-1}\{v_{4j}v_{4j+1},v_{4j}v_{4j+2},v_{4j}v_{4j+3},v_{4j+1}v_{4j+7},v_{4j+2}v_{4j+6},v_{4j+3}v_{4j+5}\}$$
in the sense of module $4k$.
\end{myDef}
The first modification is replacing a vertex of degree $3$ with a triangle in a graph.
\begin{myDef}
For a graph $G$ with a vertex $v$ of degree $3$, let $N_{G}(v)=\{x,y,z\}$. Then $G(v)$ is a new graph with vertex set $V(G(v))=(V(G)\backslash \{v\})\cup\{u_1,u_2,u_3\}$ where $u_1,u_2,u_3\notin V(G)$ and edge set$$E(G(v))=(E(G)\backslash\{vx,vy,vz\})\cup\{u_1u_2,u_2u_3,u_3u_1,u_1x,u_2y,u_3z\}\mbox{.}$$
\end{myDef}
We also write $G(u_1)(u_2)...(u_k)$ by $G(u_1,u_2,...,u_k)$ for short.

The second modification is replacing an edge whose endpoints both have degree $3$ with a $T(2,0,0,0,0)$.
\begin{myDef}
For a graph $G$ and an edge $uv\in E(G)$, where $N_{G}(u)=\{v,x_u,y_u\}$ and $N_{G}(v)=\{u,x_v,y_v\}$. Let $F$ be a copy of $T(2,0,0,0,0)$ on vertex set $\{x_1,x_2,y_1,y_2,z\}$ and edge set $\{x_1y_1, x_2y_2, zx_1, zx_2, zy_1, zy_2\}$, where $V(F)\cap V(G)=\emptyset$. Then $G(uv)$ is a new graph with vertex set $V(G(uv))=(V(G)\backslash\{u,v\})\cup V(F)$ and edge set
$$E(G(uv))=(E(G)\backslash\{uv,ux_u,uy_u,vx_v,vy_v\})\cup E(F)\cup\{x_1x_u,y_1y_u,x_2x_v,y_2y_v\}\mbox{.}$$
\end{myDef}
With these definitions, it is easy to see that the constructions in the table of Section 4 are all good graphs with $r$ vertices and $\lceil\frac{3r}{2}\rceil$ edges for some $r\ge 56$. Since the constructions for $n=8p, 8p+2, 8p+4$ and $8p+6$ in the table was given in~\cite{Clark83,Clark92}, to complete the proof of Lemma~\ref{LEM: 56}, we only need to show the optimality of the other cases. Fortunately, this was almostly proved by Stacho~\cite{Stacho98}.
\begin{prop}[Proposition 2 in~\cite{Stacho98}]\label{prop2cite}
For graph $G=J_k(v_{4i_1+2},...,v_{4i_m+2})(v_{4i_{m+1}}v_{4i_{m+1}+2})$, if $k\ge 5$ odd, $m\ge 0$, $0\le i_l\le k-1$ for $l=1,...,m+1$ and the distance (i.e. the minimal number of edges of a path between two vertices in $J_k$) $d_{J_k}(v_{4i_l+2},v_{4i_p+2})\ge 3$ for any $l\neq p$, then $G$ is a $C_r$-saturated graph on $r$ vertices, where $r=4k+2m+3$.
\end{prop}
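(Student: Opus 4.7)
Observe first that a graph on $n$ vertices is $C_n$-saturated if and only if it is maximally non-Hamiltonian. A routine vertex count shows $|V(G)|=4k+2m+3=r$: each triangle substitution replaces one vertex by three, and the bowtie substitution replaces the two endpoints of an edge by a copy of $T(2,0,0,0,0)$, contributing $2m+3$ extra vertices beyond $|V(J_k)|=4k$. Thus the task reduces to proving (a) $G$ is non-Hamiltonian and (b) $G+e$ is Hamiltonian for every non-edge $e$.

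For (a), I would proceed by induction on $m$ and on whether the bowtie substitution has been performed, using two contraction lemmas. Triangle-contraction lemma: if $H$ is any graph and $v\in V(H)$ has degree $3$, then $H(v)$ is Hamiltonian iff $H$ is Hamiltonian; a Hamilton cycle in $H(v)$ must enter and leave the triangle $\{u_1,u_2,u_3\}$ along edges that can be contracted back to two edges at $v$ in $H$. Bowtie-contraction lemma: similarly, a Hamilton cycle in $H(uv)$ must pass through the central vertex $z$ of the bowtie using exactly two of its four edges, and the only way to do this while covering all five bowtie vertices forces one of the pairs $\{zx_1,zy_1\}$ or $\{zx_2,zy_2\}$; in either case one obtains a Hamilton cycle of the original graph $H$ using the edge $uv$. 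Applying these lemmas iteratively reduces non-Hamiltonicity of $G$ to non-Hamiltonicity of $J_k$, which is the classical fact that flower snarks contain no Hamilton cycle (provable by the standard snark-parity argument on the ``squares'' $v_{4j}v_{4j+1}v_{4j+7}\cdots$).

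For (b), I would mirror the same reduction in the opposite direction. First establish, for the base case, the classical fact that $J_k$ is maximally non-Hamiltonian; this is known and can be proved by a case analysis on the relative position of the two endpoints of the added non-edge on the rim of the snark, using the symmetry of $J_k$ modulo rotations by $4$. Next, lift a Hamilton cycle from $J_k+e$ to $G+e'$ by a case analysis on where the new edge $e'$ lies relative to the modification sites: if both endpoints of $e'$ lie in the ``unmodified'' part of $G$, we expand the cycle around each triangle substitution by routing through its three new vertices via the forced pair of triangle edges, and around the bowtie by routing through the pattern $x_u x_1 z x_2 x_v$ or its analogue, depending on which edges of $uv$ are used by the original Hamilton cycle; if an endpoint of $e'$ lies inside a modification, a direct local construction exploiting a known Hamilton path in $J_k-v$ or $J_k-uv$ handles it. The distance hypothesis $d_{J_k}(v_{4i_l+2},v_{4i_p+2})\ge 3$ is used here to ensure that the triangle substitutions are pairwise ``independent'' so that the local routings do not collide.

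The main obstacle will be the careful bookkeeping in step (b), particularly when the added edge $e'$ joins two different modification gadgets or joins a gadget to the bowtie; one must produce, for each configuration, an explicit Hamilton cycle in $G+e'$ using an appropriate Hamilton cycle in $J_k$ with the required pair of edges incident to the contracted vertices, and the distance hypothesis is exactly what guarantees that such a ``target'' Hamilton cycle in $J_k$ exists (otherwise two triangle gadgets could force incompatible pairs of edges at adjacent rim vertices). A clean way to organise this is to state and prove a single lemma: \emph{for every pair of distinct vertices $x,y$ of $J_k$ with $d_{J_k}(x,y)\ge 3$, and for any prescribed pair of incident edges at each of $x$ and $y$, there is a Hamilton cycle of $J_k+xy$ using these prescribed edges}, and then derive (b) from this lemma by local expansion around each gadget.
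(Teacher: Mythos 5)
The paper offers no proof of this statement to compare against: it is imported verbatim as Proposition 2 of Stacho~\cite{Stacho98}, and the only thing the authors verify themselves is the hypothesis $d_{J_{k-2}}(v_{4a+2},v_{4b+2})=\min\{|b-a|,\,k-2-|b-a|\}\ge 3$ for the particular vertices $v_2,v_{14},v_{26},v_{38}$ used in the table. So your proposal must stand on its own as a reconstruction. Your part (a) is essentially correct: the vertex count is right, the triangle-contraction equivalence holds (a Hamilton cycle of $H(v)$ uses an even, hence exactly two, of the three pendant edges of the new triangle), and the bowtie analysis does reduce Hamiltonicity of $G$ to that of $J_k$, whose non-Hamiltonicity is classical. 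One slip: in the bowtie lemma the cases $\{zx_1,zy_1\}$ and $\{zx_2,zy_2\}$ produce a Hamilton cycle of $H$ traversing the disjoint segments $x_uuy_u$ and $x_vvy_v$, i.e.\ one that \emph{avoids} $uv$; only the four mixed cases force $uv$. This does not damage the non-Hamiltonicity reduction, which only needs ``$G$ Hamiltonian $\Rightarrow$ $J_k$ Hamiltonian.''

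The genuine gap is in part (b), and it is twofold. First, your unifying lemma is false as stated: since $J_k$ is non-Hamiltonian, every Hamilton cycle of $J_k+xy$ must use the edge $xy$, so a prescribed pair of incident edges at $x$ omitting $xy$ can never be realized; at best one can hope to prescribe \emph{one} additional edge at each endpoint, and even that reduced statement is nontrivial and unproved. Second, the distance hypothesis does not act where you place it. When the added edge joins two original (unmodified) vertices, your contraction lemmas already reduce everything to the maximal non-Hamiltonicity of $J_k$, with no interference between gadgets and no need for the distance condition. The hypothesis is needed exactly in the cases you defer --- an endpoint of the new edge lying inside a triangle or bowtie gadget, or both endpoints in gadgets --- where one must exhibit Hamilton paths of $J_k$ (or of $J_k$ with a vertex or edge deleted) with prescribed ends near the substitution sites, and these exist only when the sites are pairwise at distance at least $3$. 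That case analysis is the entire content of Stacho's proposition, and your sketch replaces it with a statement of intent plus a key lemma that cannot be true in the generality claimed. As written, (b) is a plan rather than a proof.
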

When $p\ge 7$ and $k=2p+1$, it is not difficult to show that the distance of any pair from $v_2,v_{14},v_{26},v_{38}$ in $J_{k-2}$ is at least $3$. In fact, by induction, we can show that $d_{J_{k-2}}(v_{4a+2},v_{4b+2})=\min\{|b-a|, k-2-|b-a|\}$. Then by Proposition~\ref{prop2cite}, the optimality for $n=8p+1, 8p+3, 8p+5, 8p+7$ in Table~\ref{TB: t1} is proved.

\end{document}